% ------------------------------------------------------------------------
% bjourdoc.tex for birkjour.cls*******************************************
% ------------------------------------------------------------------------
%%%%%%%%%%%%%%%%%%%%%%%%%%%%%%%%%%%%%%%%%%%%%%%%%%%%%%%%%%%%%%%%%%%%%%%%%%

\documentclass{birkjour}
\usepackage{subfigure}
\usepackage{verbatim}
\usepackage{graphicx}
\setlength{\parskip}{1.3ex plus0.3ex minus0.3ex}
\setlength{\parindent}{0em}

%
%
% THEOREM Environments (Examples)-----------------------------------------
%
 \newtheorem{thm}{Theorem}[section]
 \newtheorem{corollary}[thm]{Corollary}
 \newtheorem{lemma}[thm]{Lemma}
 \newtheorem{Proposition}[thm]{Proposition}
 \theoremstyle{definition}
 
 \theoremstyle{remark}
 \newtheorem{remark}[thm]{Remark}
 \newtheorem{example}{Example}
 \numberwithin{equation}{section}

\newcommand{\A}{\mathcal{A}}
 \newcommand{\R}{\mathbb{R}}

        \newcommand{\E}{\mathcal{E}}
    \newcommand{\B}{\mathcal{B}}

\begin{document}

%-------------------------------------------------------------------------
% editorial commands: to be inserted by the editorial office
%
%---------------------------------------------------------------------------
%Insert here the title, affiliations and abstract:
%

\title[ Affine geometry of polylines ]
 {Affine geometry of equal-volume polygons \newline in $3$-space}

%----------Author 1
\author[M.Craizer]{Marcos Craizer}

\address{%
Departamento de Matem\'{a}tica- Pontif\'{i}cia Universidade Cat\'{o}lica do Rio de Janeiro\br
Rio de Janeiro-RJ- Brazil
}
\email{craizer@puc-rio.br}

%----------Author 2
\author[S.Pesco]{Sinesio Pesco}

\address{%
Departamento de Matem\'{a}tica- Pontif\'{i}cia Universidade Cat\'{o}lica do Rio de Janeiro\br
Rio de Janeiro-RJ- Brazil
}
\email{sinesio@puc-rio.br}

\thanks{The first author thanks CNPq for financial support during the preparation of this paper.}
%----------classification, keywords, date

\subjclass{ 53A15, 53A20}

\keywords{Darboux vector field, Affine arc-length parameterization, Affine evolute, Projective length, Discrete affine geometry}

\date{September 27, 2016}
%----------additions
%%% ----------------------------------------------------------------------

\begin{abstract}
Equal-volume polygons are obtained from adequate discretizations of curves 
in $3$-space, contained or not in surfaces. In this paper we explore the similarities 
of these polygons with the affine arc-length parameterized 
smooth curves to develop a theory of discrete affine invariants.  
Besides obtaining discrete affine invariants, equal-volume polygons can also be used to estimate projective invariants of a planar curve. 
This theory has many potential applications, among them evaluation of the quality 
and computation of affine invariants of silhouette curves. 
\end{abstract}

%%% ----------------------------------------------------------------------
\maketitle
%%% ----------------------------------------------------------------------
%\tableofcontents

\section{Introduction}

We say that a smooth curve $\gamma(t)$ in $2$-space is parameterized by {\it affine arc-length} if $[\gamma'(t),\gamma''(t)]=1$, where $[\cdot,\cdot]$ denotes the determinant 
of $2$ vectors. For polygons, the corresponding condition is that the area of the triangle determined by three consecutive vertices is constant.
Planar polygons satisfying  this last condition are called {\it equal-area}
and the affine geometry of these polygons has been recently studied (\cite{Craizer13},\cite{Kaferbock14}). 
In this paper we generalize this study to polygons in $3$-space by considering the concept of equal-volume polygons.
Since we obtain discrete counterparts of known objects of the smooth theory, our results clearly belong to the field of Discrete Differential Geometry.

For a smooth curve $\phi$ contained in a surface $M$, we say that the parameterization $\phi(t)$ is {\it adapted to M} if 
\begin{equation}\label{eq:AffineArcLengthDarboux}
[\phi'(t),\phi''(t),\xi(t)]=1,
\end{equation}
where $[\cdot,\cdot,\cdot]$ denotes the determinant of $3$ vectors and $\xi$ is the parallel Darboux vector field of $\phi\subset M$ (\cite{Craizer15}). In centro-affine geometry, we consider 
curves $\phi$ in $3$-space together with a distinguished origin 
$O$, and we say that $\phi(t)$ is parameterized by centro-affine arc-length with respect to $O$ if 
\begin{equation}\label{eq:CentroAffineArcLength}
[\phi(t)-O,\phi'(t),\phi''(t)]=1,
\end{equation}
(\cite{Giblin}). A smooth curve $\Phi(t)$ in $3$-space is parameterized by {\it affine arc-length} if 
\begin{equation}\label{eq:AffineArcLength}
[\Phi'(t),\Phi''(t),\Phi'''(t)]=1,
\end{equation}
 (\cite{Davis},\cite{Izu2}).   We observe that, in all these contexts, the basic condition is the constancy of some volume. In this paper, 
we describe polygons in $3$-space whose corresponding volumes are constant, and we call them {\it equal-volume} polygons. We obtain affine invariant measures only for these equal-volume
polygons, but we also describe a simple algorithm that, by re-sampling an arbitrary polygon, obtain an equal-volume one.

We say that  a smooth curve $\phi$ contained in a surface $M$ is {\it non-degenerate} if its osculating plane does not coincide with the tangent plane
of $M$ at any point. For such curves, there exists a vector field $\xi$ tangent to $M$ and transversal to $\phi$ such that its derivative $\xi'$ 
is tangent to $M$. The direction defined by $\xi$ is unique and is called the {\it Darboux direction} of $\phi\subset M$. Moreover,
there exists a vector field $\xi$ in the Darboux direction such that $\xi'$ is tangent to the curve $\phi$, i.e., 
\begin{equation}\label{eq:DefineSigma}
\xi'(t)=-\sigma(t)\phi'(t),
\end{equation}
for some scalar function $\sigma$. This vector field 
is unique up to a multiplicative constant and is called the {\it parallel Darboux vector field}. 
It turns out that $\phi\subset M$ is a silhouette curve with respect to some point $O$ if and only if $\sigma$ is constant (\cite{Craizer15}).

As a discrete model for curves contained in surfaces, consider a polyhedron $M$ whose faces are planar quadrilaterals and let 
$\phi(i)$ be vertices of a polygon $\phi$ whose sides are connecting opposite edges of a face of $M$. The edges of $M$ containing 
vertices $\phi(i)$ correspond to Darboux directions and we can choose a vector field $\xi(i)$ in this direction such that 
the difference $\xi(i+1)-\xi(i)=\xi'(i+\tfrac{1}{2})$ is parallel to the corresponding side of the polygon $\phi$. This vector field is unique
up to a multiplicative constant, and is called the {\it parallel Darboux vector field}. We can write
a discrete counterpart of equation \eqref{eq:DefineSigma}, namely
\begin{equation}\label{eq:DiscSigma}
\xi'(i+\tfrac{1}{2})=-\sigma(i+\tfrac{1}{2})\phi'(i+\tfrac{1}{2}),
\end{equation}
for some scalar function $\sigma$, where we are replacing derivatives by differences. 
We prove that $\phi$ is a silhouette polygon for the polyhedron $M$ if and only if  
$\sigma$ is constant. This result may be used as a measure of quality of a silhouette polygon. 

A non-degenerate curve $\phi\subset M$ admits a parameterization satisfying equation \eqref{eq:AffineArcLengthDarboux}, unique up to a translation. 
The plane $\A=\A(t)$ generated by $\{\phi(t),\phi''(t)\}$ is called the {\it affine normal plane}, while the envelope $\B$ of these 
affine normal planes is a developable surface $\B$ called the {\it affine focal set} of the pair $\phi\subset M$ (\cite{Craizer15},\cite{Craizer16}). 
For silhouette curves relative to $O$, equation \eqref{eq:AffineArcLengthDarboux} reduces to equation \eqref{eq:CentroAffineArcLength}. 

We say that the polygon $\phi$ contained in the polyhedron $M$ is {\it equal-volume} if 
\begin{equation}\label{eq:DiscEqualVolumeDarboux}
\left[ \phi'(i-\tfrac{1}{2}),\phi'(i+\tfrac{1}{2}),\xi(i)\right]=1,
\end{equation}
for all $i$. Note that equation \eqref{eq:DiscEqualVolumeDarboux} is a discrete counterpart of equation \eqref{eq:AffineArcLengthDarboux}.
For such polygons, define the {\it affine normal plane} $\A(i)$ as the plane generated by $\{\phi(i),\phi''(i)\}$ and the {\it affine focal set} $\B=\B(\phi,M)$ as a discrete envelope
of these affine normal planes. For silhouette polygons $\phi$ relative to $O$, equation \eqref{eq:DiscEqualVolumeDarboux} reduces to
\begin{equation}\label{eq:EqualVolumeCA}
\left[ \phi(i-1)-O, \phi(i)-O,\phi(i+1)-O\right]=1,
\end{equation} 
which is a discrete counterpart of equation \eqref{eq:CentroAffineArcLength}.

The smooth curves $\phi\subset M$ whose affine focal set $\B$ reduces to a single line were 
characterized in \cite{Craizer16}. Consider a smooth planar curve $\Gamma(t)$ parameterized by affine arc-length and denote by $z(t)$ the {\it affine distance} or  {\it support function} 
of $\Gamma(t)$ with respect to some point $P\in\R^2$ (\cite{Cecil}). Then the affine focal set of the silhouette curve $\phi(t)=(\Gamma'(t),z(t))$ reduces to a single line and 
conversely, if $\B(\phi,M)$ is a single line, then $\phi$ is a silhouette curve obtained by this construction for some planar curve $\Gamma$ and some $P\in\R^2$. We prove in this paper a 
discrete counterpart of this characterization for equal-volume polygons contained in a polyhedron.

Consider a smooth curve $\Phi(t)$ in $3$-space parameterized by affine arc-length, i.e., satisfying equation \eqref{eq:AffineArcLength}. 
The planes through $\Phi$ parallel to $\{\Phi',\Phi'''\}$ are called {\it affine rectifying planes}
and the envelope of the affine rectifying planes $RS(\Phi)$ is called the {\it intrinsic affine binormal developable} (\cite{Izu2}). The characterization of curves $\Phi$
such that $RS(\Phi)$ is cylindrical is easily obtained from the characterization of curves $\phi=\Phi'$ whose affine focal set is a single line
(\cite{Craizer16},\cite{Izu2}).
A polygon $\Phi(i+\tfrac{1}{2})$ in $3$-space is said to be {\it equal-volume} if 
\begin{equation}\label{eq:EqualVolume}
\left[ \Phi'(i-1), \Phi'(i),\Phi'(i+1)\right]=1,
\end{equation}
for all $i$, which is equivalent to say that the difference polygon $\phi(i)=\Phi'(i)$ is equal-volume with respect to the origin. Although it is not clear
how to obtain a discrete version of the intrinsic affine binormal developable, we can obtain interesting consequences of 
the discrete characterization of polygons $\phi$ whose affine focal set is a single line.

We can also apply the equal-volume model in a projective setting.
Given a smooth planar curve $(\tilde\phi(t),1)$, there exists a projectively equivalent
curve $\phi(t)$ in $3$-space satisfying equation \eqref{eq:CentroAffineArcLength} with $O$ equal the origin. 
From this curve, we can define the projective length $pl(\tilde\phi)$ (see \cite{Guieu}).
For a planar polygon $(\tilde\phi(i),1)$, we can also obtain a projectively equivalent equal-volume polygon $\phi(i)$ in $3$-space and, from this polygon, 
we obtain two definitions for the projective length, $pl_1(\tilde\phi)$ and $pl_2(\tilde\phi)$, that
unfortunately do not coincide. Nevertheless, we prove that if the polygon is obtained from a dense enough sampling of a smooth curve, both the 
discrete projective length $pl_1(\tilde\phi)$ and $pl_2(\tilde\phi)$ are close to the projective length of the smooth curve.

The paper is organized as follows: In section 2 we review the smooth results for affine geometry of curves contained in surfaces, 
affine geometry of curves in $3$-space and projective geometry of planar curves. In section 3 we calculate affine invariants of equal-volume polygons contained in polyhedra. 
In section 4, we apply the results of section 3
to compute affine invariants for equal-volume polygons in $3$-space. In section 5 we discuss the projective length of a planar polygon.

\section{Affine geometry of smooth curves in $3$-space}

\subsection{Curves contained in surfaces}\label{sec:SmoothCM}

Let $\phi:I\to\R^3$ be a curve contained in a surface $M$ and $\xi$ a vector field tangent to $M$ and transversal to $\phi$. We shall assume that $\phi\subset M$
is non-degenerate, i.e., the osculating plane of $\phi$ does not coincide with the tangent plane of $M$ at any point. 
Under this hypothesis, there exists a vector field $\xi(t)$, unique up to scalar (non-constant) multiple, such that $\xi'(t)$ is tangent to $M$, 
for any $t\in I$. The vector field $\xi$ determines a unique direction tangent to $M$, which is called the {\it Darboux direction} along $\phi$. 
In the Darboux direction, there exists a vector field $\xi(t)$, unique up to a constant multiple, such that $\xi'(t)$ is tangent to $\phi(t)$, for any $t\in I$. We call this vector field the 
{\it parallel Darboux vector field}. The parallel Darboux vector field satisfies equation \eqref{eq:DefineSigma}, for some scalar function $\sigma$. 

The envelope of tangent planes is the developable surface 
$$
x(t,u)=\phi(t)+u\xi(t).
$$
This surface is called the {\it Osculating Tangent Developable Surface of $M$ along $\phi$} and will be denoted $\E$ (\cite{Craizer15},\cite{Izu3}). The 
surface $\E$ is a cone if and only if $\sigma$ is constant. In this case, the vertex of the cone is given by $O=\phi+\sigma^{-1}\xi$ and the curve $\phi$
is a silhouette curve from the point of view of $O$.

Under the non-degeneracy hypothesis, there exists a parameterization $\phi(t)$ of $\phi$, unique up to a translation, such that equation \eqref{eq:AffineArcLengthDarboux} holds. 
The plane $\A(t)$ generated by $\{\xi(t),\phi''(t)\}$ is called the {\it affine normal plane} of $\phi\subset M$. 
Condition \eqref{eq:AffineArcLengthDarboux} is equivalent to $\phi'''(t)$ tangent to $M$. Thus we can write 
\begin{equation}\label{eq:Frenet1}
\phi'''(t)=-\rho(t)\phi'(t)+\tau(t)\xi(t), 
\end{equation} 
for some scalar functions $\rho$ and $\tau$. 

There exists a basis $\{\xi(t),\eta(t)\}$ of the affine normal plane $\mathcal{A}(t)$ with $\eta$ parallel, i.e., $\eta'$ tangent to $\phi$. In fact,
define the vector field $\eta$ by $\eta=\phi''+\lambda\xi$, where $\lambda'=-\tau$. Taking $\mu=\rho+\lambda\sigma$, we obtain the equation
\begin{equation}\label{eq:EtaParallel}
\eta'(t)=-\mu(t)\phi'(t),
\end{equation} 
which in particular says that $\eta$ is parallel. 
The {\it affine focal set} $\B$, or {\it affine evolute}, is the envelope of affine normal planes. It is the developable surface generated by the lines 
passing through $O=\phi+\sigma^{-1}\xi$ and $Q=\phi+\mu^{-1}\eta$. The affine focal set reduces to a single line
if and only if $\sigma$ and $\mu$ are constant (\cite{Craizer16}).

If $\phi$ is contained in a plane $L$, then $\tau(t)=0$ for any $t\in I$ and conversely, if $\tau(t)=0$ for any $t\in I$, then $\phi$ is planar. Denote by $n$ a euclidean unitary normal to $L$
and let $\xi$ be the vector field in the Darboux direction such that $\xi\cdot n=1$, where $\cdot$ denotes the usual inner product.  Then $\xi$ is a parallel Darboux vector field.
In this case, the adapted parameter $t$ corresponds to the affine arc-length parameter and $\rho(t)$ is the affine curvature of $\phi\subset L$. 
For planar curves, $\lambda=0$ and so $\phi''=\eta\subset L$ is parallel. Then the set $\mathcal{B}\cap L$ coincides with the affine evolute of the planar curve $\phi\subset L$ (\cite{Izu1}). 

For silhouette curves, $\xi(t)=\phi(t)-O$ and so equation \eqref{eq:AffineArcLengthDarboux} becomes \eqref{eq:CentroAffineArcLength}, i.e., $\phi(t)$ is parameterized by {\it centro-affine arc-length}.
Assuming $O$ equals the origin, equation \eqref{eq:Frenet1} becomes
\begin{equation}\label{eq:FrenetCA}
\phi'''(t)=-\rho(t)\phi'(t)+\tau(t)\phi(t).
\end{equation}
Moreover $\mu=\rho-\lambda$, which implies $\mu'=\rho'+\tau$.

\paragraph{Curves whose affine focal set $\B$ reduces to a single line} The affine focal set reduces to a single line if and only if $\mu$ and $\sigma$ are constant. 
Since $\sigma$ is constant, $\phi$ is necessarily a silhouette curve from the point of view of $O$, that we shall assume to be the origin.
The condition $\mu$ constant can be written as $\rho'+\tau=0$. In this case equation \eqref{eq:FrenetCA}
becomes 
$\phi'''=- \left(  \rho\phi\right)'$, which is equivalent to
$$
\phi''(t)=-\rho(t)\phi(t)+Q,
$$
for some constant vector $Q$. Assuming that $Q=(0,0,1)$ and writing $\phi(t)=\left( \gamma(t),z(t) \right)$, this equation becomes
\begin{equation}\label{eq:MuConstant}
\gamma''(t)=-\rho(t)\gamma(t);\ \ z''(t)=-\rho(t)z(t)+ 1.
\end{equation}

Consider a convex planar curve $\Gamma(t)$. Assume that $\Gamma(t)$ is parameterized by affine arc-length, i.e., $\left[\Gamma'(t),\Gamma''(t)  \right]=1$, and let 
$\rho(t)$ denotes the affine curvature of $\Gamma$, i.e., $\Gamma'''(t)=-\rho(t)\Gamma'(t)$, $t\in I$. Let $\gamma(t)=\Gamma'(t)$ and denote by $z(t)=\left[\Gamma(t)-P,\gamma(t)\right]$ the
{\it affine distance}, or {\it support function}, of $\Gamma$ with respect to a point $P\in\R^2$ (\cite{Cecil}).

The following proposition was proved in \cite{Craizer16}:
\begin{Proposition}\label{prop:MuConstant}
The affine focal set $\B$ of the curve $\phi(t)=\left( \gamma(t), z(t)  \right)$ is a single line, and conversely, any curve $\phi$ whose affine focal set
is a single line can be obtained as above, for some planar curve $\Gamma$ and some point $P\in\R^2$. 
\end{Proposition}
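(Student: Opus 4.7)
The plan is to use the characterization recalled immediately before the statement: $\mathcal{B}$ reduces to a single line if and only if $\sigma$ and $\mu$ are both constant, equivalently $\phi$ is a centro-affine arc-length silhouette curve whose coordinates satisfy the system \eqref{eq:MuConstant} after the normalization $Q=(0,0,1)$. Both implications then reduce to matching \eqref{eq:MuConstant}, and the converse additionally requires recovering $\Gamma$ and $P$ from $\phi=(\gamma,z)$.

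\emph{Forward direction.} Set $\gamma=\Gamma'$ and $\phi=(\gamma,z)$. Since $\Gamma'''=-\rho\Gamma'$, differentiation gives $\gamma''=-\rho\gamma$. Differentiating $z(t)=[\Gamma(t)-P,\gamma(t)]$ once and using $[\gamma,\gamma]=0$ yields $z'=[\Gamma-P,\gamma']$, and once more with $[\Gamma',\Gamma'']=1$ and $\gamma''=-\rho\gamma$ produces $z''=1-\rho z$, so \eqref{eq:MuConstant} holds. A cofactor expansion of $[\phi,\phi',\phi'']$ along the third column, combined with $[\gamma,\gamma']=1$, $[\gamma,\gamma'']=0$ and $[\gamma',\gamma'']=\rho$, yields $[\phi,\phi',\phi'']=\rho z+z''=1$, so $\phi$ is centro-affine arc-length parameterized and its affine focal set is a single line.

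\emph{Converse.} Suppose $\mathcal{B}(\phi,M)$ is a single line, so $\sigma$ and $\mu$ are constant. Constancy of $\sigma$ forces $\phi$ to be a silhouette curve with vertex $O$, which we place at the origin, so $\phi$ is centro-affine arc-length and $\phi''=-\rho\phi+Q$ for a constant vector $Q$; a unimodular change of coordinates brings $Q$ to $(0,0,1)$, and writing $\phi=(\gamma,z)$ recovers \eqref{eq:MuConstant}. Let $\Gamma(t)$ be any antiderivative of $\gamma(t)$; then $\Gamma'''=\gamma''=-\rho\gamma=-\rho\Gamma'$. Running the previous cofactor expansion in reverse, the identity $1=[\phi,\phi',\phi'']=[\gamma,\gamma'](\rho z+z'')$ together with $\rho z+z''=1$ forces $[\Gamma',\Gamma'']=[\gamma,\gamma']=1$, so $\Gamma$ is affine arc-length with affine curvature $\rho$.

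It remains to identify $z$ with a support function $[\Gamma-P,\gamma]$ for a suitable $P$. Pick any $P_0\in\R^2$ and set $w(t)=[\Gamma(t)-P_0,\gamma(t)]$; a direct differentiation shows $w''+\rho w=1$, the same ODE satisfied by $z$. Hence $z-w$ lies in the kernel of $u\mapsto u''+\rho u$. Since $\gamma''+\rho\gamma=0$ and $[\gamma,\gamma']=1\neq 0$, the components $\gamma_1,\gamma_2$ form a basis of that two-dimensional kernel, so $z-w=c_1\gamma_1+c_2\gamma_2=[v,\gamma]$ for the constant vector $v=(c_2,-c_1)$. Setting $P=P_0-v$ gives $z=[\Gamma-P,\gamma]$. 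I expect this last step to be the main obstacle: the delicate point is to recognize that the general homogeneous solution can be packaged as a bracket $[v,\gamma]$, so that the two-parameter ambiguity in $z-w$ is absorbed exactly into the two-parameter freedom of choosing $P$ and the base point of the antiderivative $\Gamma$.
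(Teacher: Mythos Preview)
Your argument is correct. Note, however, that the paper does not actually prove this proposition: it is quoted from \cite{Craizer16}, and the only proof given in the paper is that of the discrete analog, Proposition~\ref{prop:Main}. Compared with that proof, your forward direction is the continuous version of the same direct computation of $z''$; your additional check that $[\phi,\phi',\phi'']=1$ is a detail the paper leaves implicit even in the discrete case. For the converse, the paper dispatches it in one line by dimension counting (``since $P$ has two degrees of freedom, this is the general solution of the second order difference equation''), whereas you unpack this explicitly: you recognize $\gamma_1,\gamma_2$ as a Wronskian basis for the kernel of $u\mapsto u''+\rho u$ and absorb the homogeneous part $z-w$ into a shift of $P$. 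This is the same idea made constructive, and it buys you an explicit formula for $P$ rather than just its existence.
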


\subsection{Curves in $3$-space}\label{sec:SmoothSpatial}

Consider now a curve $\Phi$ in the $3$-space, without being contained in a given surface $M$. 
We say that a parameterization $\Phi(t)$ of $\Phi$ is by {\it affine arc-length} if formula \eqref{eq:AffineArcLength} holds. This condition
implies that $\Phi''''(t)$ belongs to the plane generates by $\phi''(t)$ and $\phi'(t)$ and thus we obtain equation 
\begin{equation}\label{eq:FrenetSp}
\Phi''''(t)=-\rho(t)\Phi''(t)+\tau(t)\Phi'(t),
\end{equation}
for some scalar functions $\rho$ and $\tau$. Writing $\phi(t)=\Phi'(t)$, we observe that equation \eqref{eq:FrenetSp} reduces to
\eqref{eq:FrenetCA}.

The plane passing through $\Phi(t)$ and generated by $\{\Phi'(t),\Phi'''(t)\}$ is called 
{\it affine rectifying plane} and the envelope $RS(\Phi)$ of the affine rectifying planes is called the {\it intrinsic affine binormal developable} of $\Phi$. 
It is proved in \cite{Izu2} that $RS(\Phi)$ is cylindrical if and only if $\rho'+\tau=0$.

\paragraph{Curves with $\mu$ constant} The condition $\mu$ constant is equivalent to $\rho'+\tau=0$. Consider a convex planar
curve $\Gamma$ parameterized by affine arc-length and let
\begin{equation*}
Z(t)=\int_{t_0}^t \left[\Gamma(s)-P,\Gamma'(s)\right] ds.
\end{equation*}
Then $Z(t)$ represents the area of the planar region bounded by $\Gamma(s)$, $t_0\leq s\leq t$, and the segments $P\Gamma(t_0)$ and $P\Gamma(t)$. The following proposition
is a direct consequence of proposition \ref{prop:MuConstant}:

\begin{Proposition}
For the  curve $\Phi(t)=(\Gamma(t), Z(t))$, $\mu$ is constant, and conversely, any curve $\Phi$ in $3$-space with $\mu$ constant
is obtained by this construction, for some convex planar curve $\Gamma$ and some point $P\in\R^2$. 
\end{Proposition}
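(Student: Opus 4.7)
The plan is to reduce the proposition to Proposition \ref{prop:MuConstant} by passing to $\phi(t)=\Phi'(t)$. First I would observe that if $\Phi$ is parameterized by affine arc-length in the sense of \eqref{eq:AffineArcLength}, then $\phi=\Phi'$ automatically satisfies $[\phi,\phi',\phi'']=1$, i.e.\ $\phi$ is parameterized by centro-affine arc-length with respect to the origin. Moreover, substituting $\phi=\Phi'$ into \eqref{eq:FrenetSp} yields $\phi'''=-\rho\phi'+\tau\phi$, which is exactly \eqref{eq:FrenetCA} with the same functions $\rho,\tau$. Since for a silhouette curve centered at the origin one has $\xi=\phi$ and hence $\sigma\equiv-1$, the quantity $\mu=\rho-\lambda$ (with $\lambda'=-\tau$) for $\phi$ satisfies $\mu'=\rho'+\tau$, exactly the same differential identity as for $\Phi$. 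Consequently $\mu$ is constant for $\Phi$ if and only if both $\mu$ and $\sigma$ are constant for $\phi=\Phi'$, i.e.\ if and only if the affine focal set $\mathcal{B}$ of $\phi$ reduces to a single line.

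For the direct implication, start with a convex planar curve $\Gamma$ parameterized by affine arc-length and set $\Phi(t)=(\Gamma(t),Z(t))$. Then
\begin{equation*}
\Phi'(t)=\bigl(\Gamma'(t),Z'(t)\bigr)=\bigl(\gamma(t),z(t)\bigr),
\end{equation*}
where $\gamma=\Gamma'$ and $z(t)=[\Gamma(t)-P,\Gamma'(t)]$ is the affine support function of $\Gamma$ at $P$. By Proposition \ref{prop:MuConstant} the affine focal set of $\phi=\Phi'$ is a single line, so $\mu$ is constant for $\phi$, and by the identification above $\mu$ is constant for $\Phi$. One should also verify that $\Phi$ is itself parameterized by affine arc-length, which reduces to checking $[\phi,\phi',\phi'']=1$; expanding the $3\times 3$ determinant along its last column gives $z[\Gamma'',\Gamma''']-z'[\Gamma',\Gamma''']+z''[\Gamma',\Gamma'']$, and using $\Gamma'''=-\rho\Gamma'$ together with $z''=-\rho z+1$ (the latter following from a direct computation of $z''$ using $[\Gamma',\Gamma'']=1$) this simplifies to $1$.

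For the converse, suppose $\Phi$ is parameterized by affine arc-length and has $\mu$ constant. Put $\phi=\Phi'$. By the preceding paragraph $\phi$ is centro-affine arc-length parameterized with $\mu$ and $\sigma$ both constant, so its affine focal set is a single line. Proposition \ref{prop:MuConstant} then produces a convex planar curve $\Gamma$ (parameterized by affine arc-length) and a point $P\in\R^2$ such that $\phi(t)=(\Gamma'(t),z(t))$ with $z(t)=[\Gamma(t)-P,\Gamma'(t)]$. Integrating componentwise and absorbing the constants of integration into a translation of $\Phi$, we get $\Phi(t)=(\Gamma(t),Z(t))$ with $Z$ as defined in the statement.

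The only nontrivial step is the algebraic identity $[\phi,\phi',\phi'']=1$ for $\phi=(\Gamma',z)$ in the direct implication; everything else is bookkeeping that matches the scalars $\rho,\tau,\mu$ attached to $\Phi$ with those attached to $\phi=\Phi'$ and an invocation of Proposition \ref{prop:MuConstant}.
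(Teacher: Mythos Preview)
Your proposal is correct and follows exactly the route the paper intends: the paper states this proposition as ``a direct consequence of Proposition~\ref{prop:MuConstant}'' without further proof, and your argument supplies precisely that reduction via $\phi=\Phi'$, together with the bookkeeping that identifies the scalars $\rho,\tau,\mu$ for $\Phi$ with those for $\phi$. The verification that $[\phi,\phi',\phi'']=1$ using $\Gamma'''=-\rho\Gamma'$ and $z''=-\rho z+1$ is a nice extra detail the paper leaves implicit.
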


\subsection{Projective invariants}\label{sec:SmoothProjective}

Consider a parameterized planar curve $\tilde\phi(t)$, $t\in I$, without inflection points. 
Any curve of the form $\phi(t)=a(t)\tilde\phi(t)$ is projectively equivalent to $\tilde\phi(t)$ and is called a {\it representative} of $\tilde\phi(t)$. 
It turns out that there exists a representative $\phi(t)$ of $\tilde\phi(t)$ satisfying formula \eqref{eq:CentroAffineArcLength} with $O$ equal to the origin.
Then $\phi'''(t)$ belongs to the space generated by $\{\phi'(t),\phi(t)\}$ and equation \eqref{eq:FrenetCA} holds, 
for some scalar functions $\rho$ and $\tau$. 

The quantity $\rho'(t)+2\tau(t)$ is projectively invariant and $\rho'(t)+2\tau(t)=0$ if and only if $\phi$ is contained in a quadratic cone. In fact, 
\begin{equation}\label{eq:ProjectiveLength}
pl(\tilde\phi)=\int_{I} (\rho'(t)+2\tau(t))^{1/3}dt
\end{equation}
is the projective length of $\tilde\phi$ (see \cite{Guieu}).

\section{Affine geometry of equal-volume polygons contained in polyhedra}

In this section, we obtain discrete counterparts of the results of section \ref{sec:SmoothCM}. The derivatives are replaced by differences, and so 
for a function $f:\{1,...,N\}\to\R^k$, we denote 
\begin{equation*}
f'(i+\tfrac{1}{2})= f(i+1)-f(i),\ \ f''(i)=f'(i+\tfrac{1}{2})-f'(i-\tfrac{1}{2}),
\end{equation*}
and so on. 

\subsection{Basic model}

Consider a polyhedron $M$ whose faces are planar quadrilaterals and let $\phi$ be a polygonal line such that each of its sides are connecting opposite edges
of a face of $M$. We shall denote by $\phi(i)$, $1\leq i\leq N$, the vertices of such polygon and by  
$\xi(i)$ a vector in the direction of the edges of $M$ containing $\phi(i)$. Edges of $M$ that don't intersect $\phi$ are not important in our model (see Figure \ref{fig:Polygonal}).

\begin{figure}[htb]
 \centering
 \includegraphics[width=0.60\linewidth]{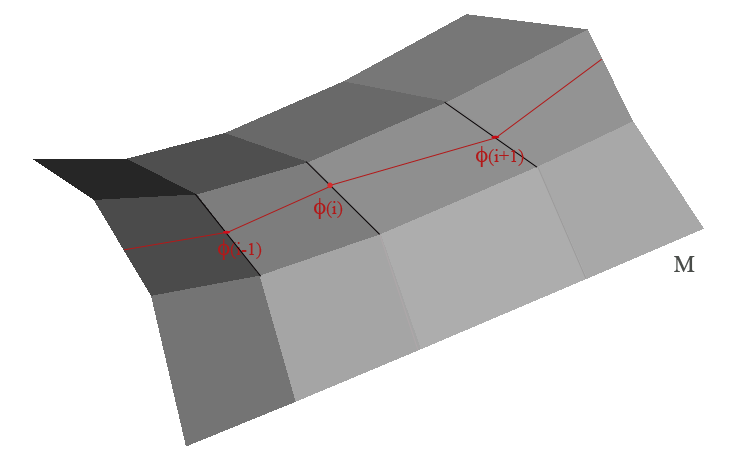}
 \caption{Polygonal line $\phi$ contained in polyhedron $M$.}
\label{fig:Polygonal}
\end{figure}

We shall denote by $T_{i+1/2}M$ the face of $M$ that contains the side $i+1/2$. 
By the planar quadrilaterals hypothesis, the vectors $\phi'(i+\tfrac{1}{2})$, $\xi(i)$ and $\xi(i+1)$ belong to $T_{i+1/2}M$, 
which is a discrete counterpart of the Darboux condition $\xi'$ tangent to $M$. It is clear that there exists $\xi$,  unique up to a multiplicative constant,
such that $\xi'(i+1/2)$ is parallel to $\phi'(i+1/2)$. This vector field is the {\it parallel Darboux} vector field of $\phi\subset M$ and equation \eqref{eq:DiscSigma} holds, for some scalar function $\sigma$  (see Figure \ref{fig:XiVector}).

\begin{figure}[htb]
 \centering
 \includegraphics[width=0.60\linewidth]{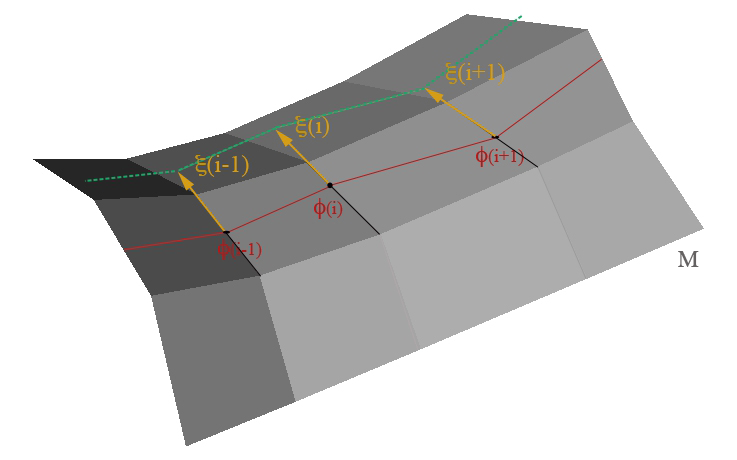}
 \caption{Parallel Darboux vectors field $\xi$. The segments connecting the endpoints of the vectors $\xi(i)$ are parallel to the sides of $\phi$.}
\label{fig:XiVector}
\end{figure}

\subsection{Osculating developable polyhedron}

The line $x(i,u)=\phi(i)+u\xi(i)$, $u\in\R$,
is the support line of the edge of the polyhedron $M$ that contains $\phi$. Thus $x(i,u)$ and $x(i+1,u)$ are co-planar
and denote by $O(i+\tfrac{1}{2})$ the intersection point of these lines. We have that
\begin{equation}\label{eq:DefineO}
O(i+\tfrac{1}{2})=\phi(i)+\sigma^{-1}(i+\tfrac{1}{2})\xi(i)= \phi(i+1)+\sigma^{-1}(i+\tfrac{1}{2})\xi(i+1).
\end{equation}
The {\it osculating developable polyhedron} $\E$ is the polyhedron whose face $i+1/2$ is the region of $T_{i+1/2}M$ bounded 
by $x(i,u)$ and $x(i+1,u)$ and containing the side $i+1/2$ of $\phi$ (see Figure \ref{fig:ODP}).

\begin{figure}[htb]
 \centering
 \includegraphics[width=0.60\linewidth]{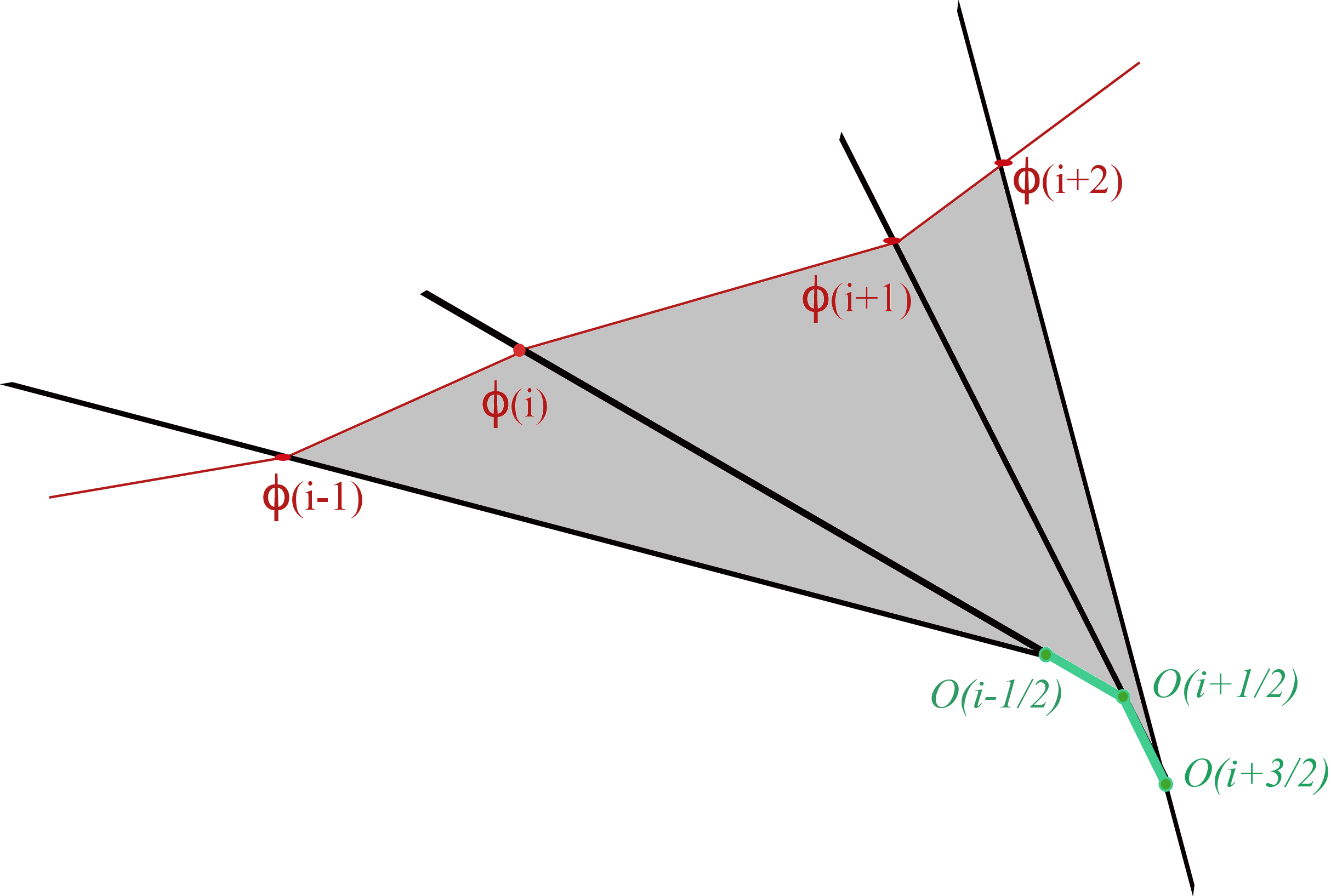}
 \caption{Osculating developable polyhedron $\E$}
\label{fig:ODP}
\end{figure}

We have the following proposition:

\begin{Proposition}\label{prop:DiscreteODS}
The following statements are equivalent:
\begin{enumerate}
\item $\sigma(i+\tfrac{1}{2})$ does not depend on $i$.
\item The point $O(i+\tfrac{1}{2})$ does not depend on $i$. 
\item $\E$ is a cone.
\end{enumerate}
\end{Proposition}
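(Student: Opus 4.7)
The plan is to exploit the two-sided identity for $O(i+\tfrac{1}{2})$ given in equation \eqref{eq:DefineO} and then interpret the three statements as different manifestations of the same condition. The core observation is that $O(i+\tfrac{1}{2})$ is defined as the intersection of the two consecutive support lines $x(i,\cdot)$ and $x(i+1,\cdot)$, and equation \eqref{eq:DefineO} gives two equivalent formulas for it: one anchored at $\phi(i)$ with direction $\xi(i)$, one anchored at $\phi(i+1)$ with direction $\xi(i+1)$.

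For the equivalence of (1) and (2), I would compare $O(i+\tfrac{1}{2})$ and $O(i-\tfrac{1}{2})$ by writing each in the form anchored at the shared vertex $\phi(i)$. From \eqref{eq:DefineO} applied with $i$ and with $i-1$ respectively, one gets
\begin{equation*}
O(i+\tfrac{1}{2})-O(i-\tfrac{1}{2}) = \bigl(\sigma^{-1}(i+\tfrac{1}{2})-\sigma^{-1}(i-\tfrac{1}{2})\bigr)\,\xi(i).
\end{equation*}
Since $\xi(i)\neq 0$, the two points coincide iff the two values of $\sigma^{-1}$ (hence of $\sigma$) agree. Applying this at every index delivers (1)$\Leftrightarrow$(2).

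For (2)$\Leftrightarrow$(3), I would simply use the definition of a discrete cone: a developable polyhedron is a cone precisely when all of its generating lines (edges) pass through a single common point. Since the edges of $\E$ are exactly the support lines $x(i,\cdot)$, and since consecutive edges $x(i,\cdot)$ and $x(i+1,\cdot)$ meet precisely at $O(i+\tfrac{1}{2})$, the set of edges shares a common point iff all the $O(i+\tfrac{1}{2})$ coincide; that common point is then the apex of the cone.

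There is no real obstacle here: the statement is essentially an unpacking of the definitions, and the only non-trivial ingredient is the symmetric formula \eqref{eq:DefineO}, which has already been established. The only small care needed is to be consistent about the two anchor-point expressions when subtracting $O(i\pm\tfrac{1}{2})$, so that the difference cleanly isolates the factor $\xi(i)$ and one can exploit its non-vanishing.
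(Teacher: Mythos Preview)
Your proposal is correct and follows essentially the same route as the paper: the paper also computes
\[
O(i+\tfrac{1}{2})-O(i-\tfrac{1}{2})=\bigl(\sigma^{-1}(i+\tfrac{1}{2})-\sigma^{-1}(i-\tfrac{1}{2})\bigr)\,\xi(i)
\]
to obtain (1)$\Leftrightarrow$(2), and then declares (2)$\Leftrightarrow$(3) to be obvious from the definition of a cone, exactly as you argue.
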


\begin{proof}
Observe that
\begin{equation}\label{eq:DerivO}
O(i+\tfrac{1}{2})-O(i-\tfrac{1}{2})=\left( \sigma^{-1}(i+\tfrac{1}{2})-\sigma^{-1}(i+\tfrac{1}{2}) \right) \xi(i)
\end{equation}
Thus (1) is equivalent to (2). The equivalence between (2) and (3) is obvious.
\end{proof}

The polygons $\phi\subset M$ for which $\E$ reduces to a point $O$ is the object of study of the centro-affine geometry.
In this case, the polygon $\phi$ can be thought as a silhouette polygon of $M$ from the point of view of $O$.

\subsection{Equal-volume polygons}

We say that the polygon $\phi$ contained in the polyhedron $M$ is {\it equal-volume} if equation \eqref{eq:DiscEqualVolumeDarboux} holds. 

\begin{lemma}\label{lemma:Phi3LinhasTangente}
The polygon $\phi\subset M$ is equal-volume if and only if 
\begin{equation*}\label{eq:DiscAdapted}
\phi'''(i+\tfrac{1}{2})\in T_{i+1/2}M.
\end{equation*}
\end{lemma}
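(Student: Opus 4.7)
The plan is to reduce the lemma to a single identity,
\[
V(i+1)-V(i)\;=\;\bigl[\phi'(i+\tfrac{1}{2}),\,\phi'''(i+\tfrac{1}{2}),\,\xi(i)\bigr],
\]
where $V(j):=[\phi'(j-\tfrac{1}{2}),\phi'(j+\tfrac{1}{2}),\xi(j)]$ is the volume appearing in \eqref{eq:DiscEqualVolumeDarboux}. Because $\phi\subset M$ is non-degenerate, the face $T_{i+1/2}M$ is the $2$-plane spanned by $\phi'(i+\tfrac{1}{2})$ and $\xi(i)$, so the right-hand side vanishes precisely when $\phi'''(i+\tfrac{1}{2})\in T_{i+1/2}M$. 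On the other hand, $V(i+1)-V(i)=0$ for every $i$ says that $V$ is constant; since the parallel Darboux vector field $\xi$ is determined only up to a multiplicative constant, we may rescale it so that this constant value is $1$, which is exactly the equal-volume condition.

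The computation uses only trilinearity and alternation of the determinant. For $V(i+1)$, apply \eqref{eq:DiscSigma} in the form $\xi(i+1)=\xi(i)-\sigma(i+\tfrac{1}{2})\phi'(i+\tfrac{1}{2})$: the $\sigma$-term is killed by repetition of $\phi'(i+\tfrac{1}{2})$, and then replacing $\phi'(i+\tfrac{3}{2})$ by $\phi''(i+1)=\phi'(i+\tfrac{3}{2})-\phi'(i+\tfrac{1}{2})$ in the middle slot gives
\[
V(i+1)\;=\;\bigl[\phi'(i+\tfrac{1}{2}),\,\phi''(i+1),\,\xi(i)\bigr].
\]
For $V(i)$, swap the first two arguments and substitute $\phi'(i-\tfrac{1}{2})=\phi'(i+\tfrac{1}{2})-\phi''(i)$ to obtain
\[
V(i)\;=\;\bigl[\phi'(i+\tfrac{1}{2}),\,\phi''(i),\,\xi(i)\bigr].
\]
Subtracting and using $\phi''(i+1)-\phi''(i)=\phi'''(i+\tfrac{1}{2})$ yields the displayed identity.

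I do not anticipate a real obstacle; the only substantive point is the observation that $\phi'(i+\tfrac{1}{2})$ and $\xi(i)$ are linearly independent in $T_{i+1/2}M$, which is the discrete form of non-degeneracy encoded in the basic model (the planar-quadrilateral face together with $\xi$ transversal to $\phi$). With that in hand, both directions of the equivalence read off immediately from the identity above.
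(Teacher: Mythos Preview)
Your proof is correct and follows essentially the same approach as the paper: both compute the difference $V(i+1)-V(i)$ and, using the parallel Darboux condition $\xi'(i+\tfrac{1}{2})\parallel\phi'(i+\tfrac{1}{2})$, reduce it to the single determinant $[\phi'(i+\tfrac{1}{2}),\phi'''(i+\tfrac{1}{2}),\xi(i)]$. Your organization is slightly cleaner, and you make explicit two points the paper leaves implicit --- the rescaling of $\xi$ that turns ``$V$ constant'' into ``$V=1$'', and the non-degeneracy ensuring $\phi'(i+\tfrac{1}{2})$ and $\xi(i)$ span $T_{i+1/2}M$ --- but the argument is the same.
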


\begin{proof}
Equation \eqref{eq:DiscEqualVolumeDarboux} is equivalent to
$$
\left[\phi'(i+\tfrac{1}{2}),\phi'(i+\tfrac{3}{2}),\xi(i+1)  \right]-\left[\phi'(i-\tfrac{1}{2}),\phi'(i+\tfrac{1}{2}),\xi(i)  \right]=0,
$$
for each $i$, which is equivalent to
$$
\left[ \phi'(i+\tfrac{1}{2}), \phi'(i+\tfrac{3}{2}), \xi(i+1)-\xi(i)\right]-\left[\phi'(i+\tfrac{1}{2}),\phi''(i)-\phi''(i+1),\xi(i)  \right]=0.
$$
By the parallel Darboux condition, the first parcel is zero and thus the above condition is equivalent to
$$
\left[\phi'(i+\tfrac{1}{2}),\phi''(i)-\phi''(i+1),\xi(i)  \right]=0,
$$
which is clearly equivalent to $\phi'''(i+1/2)$ belongs to $T_{i+1/2}M$. 
\end{proof}

We shall assume along the paper that the polygon $\phi\subset M$ is equal-volume. By the above lemma we can write
\begin{equation}\label{eq:DiscFrenet1}
\left\{
\begin{array}{c}
\phi'''(i+\tfrac{1}{2})=-\rho_{2}(i) \phi'(i+\tfrac{1}{2})  +\tau(i+\tfrac{1}{2})\xi(i+1)\\
\phi'''(i+\tfrac{1}{2})=-\rho_{1}(i+1) \phi'(i+\tfrac{1}{2})  +\tau(i+\tfrac{1}{2})\xi(i)
\end{array}
\right.
\end{equation}
for some scalar functions $\rho_1$, $\rho_2$ and $\tau$ satisfying the 
compatibility equation
\begin{equation}\label{eq:Compatibility}
-\tau(i+\tfrac{1}{2})\sigma(i+\tfrac{1}{2})=\rho_{2}(i)-\rho_{1}({i+1}).
\end{equation}
Equations \eqref{eq:DiscFrenet1} are discrete counterparts of equation \eqref{eq:Frenet1}.

\bigskip\noindent
\begin{remark} Starting from a general polygon $\phi\subset M$, we may obtain an equal-volume polygon $\bar\phi\subset\bar {M}$ by the following inductive algorithm (see Figure \ref{fig:EqualVolume}): 

\begin{enumerate}
\item Let $(\bar\phi(i),\bar\xi(i))=(\phi(i),\xi(i))$, for $i=1,2,3$.
\item Given the pair $(\bar\phi,\bar\xi)$ at $i-1$, $i$ and $i+1$, consider a plane parallel to $T_{i+1/2}\bar{M}$ through $\bar\phi(i-1)$ and let $\bar\phi(i+2)$ be the intersection of this plane with the polygonal line $\phi$.
\item The direction of the vector $\bar\xi(i+2)$ is obtained by linear interpolation of $\xi(k)$ and $\xi(k+1)$, where $k+\tfrac{1}{2}$ is the index of the side of $\phi$ containing $\bar\phi(i+2)$. Thus $\bar\xi(i+2)\in T_{k+1/2}M$.

\end{enumerate}

\begin{figure}[htb]
 \centering
 \includegraphics[width=0.50\linewidth]{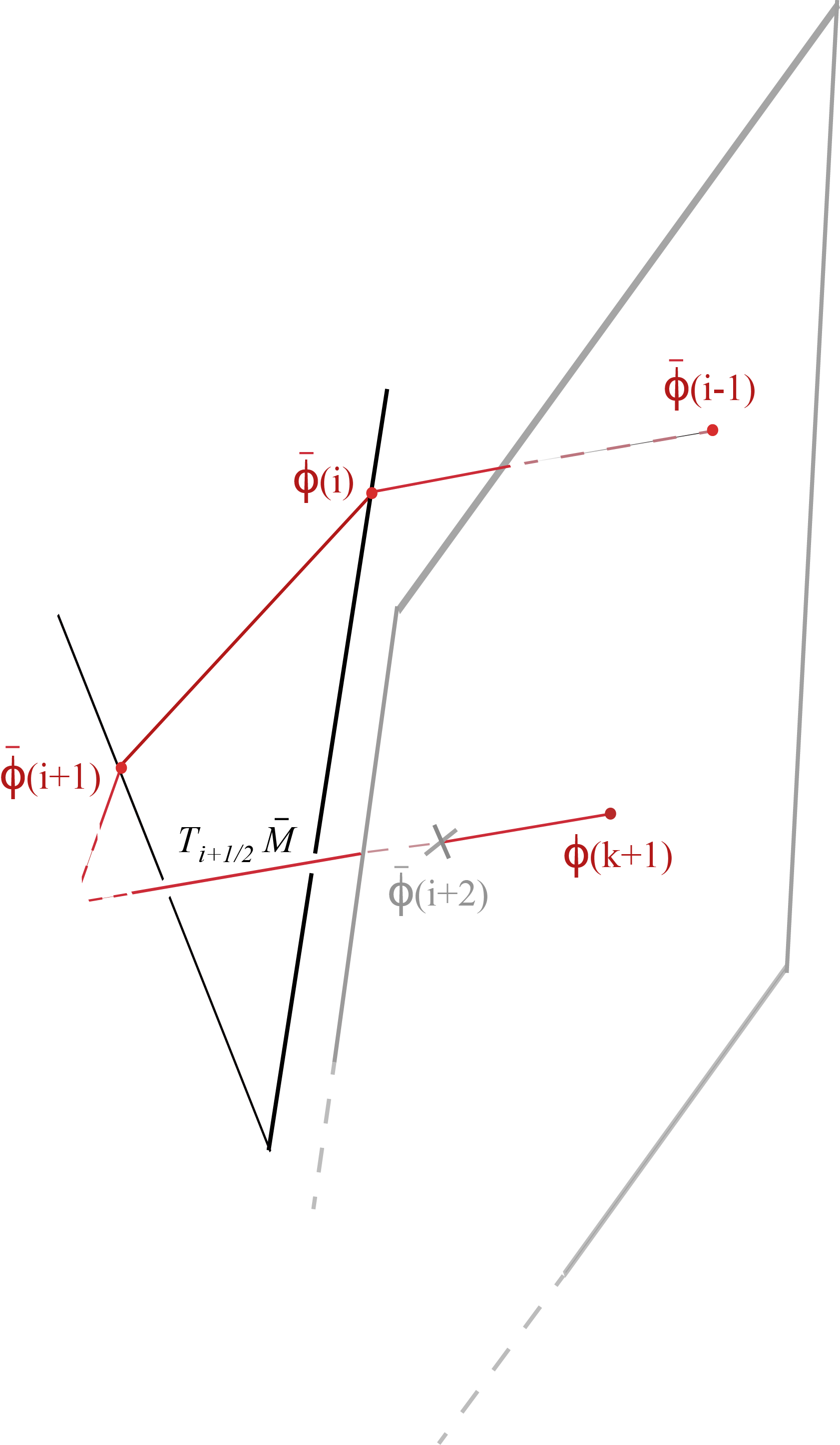}
 \caption{Algorithm to construct an equal-volume polygon}
\label{fig:EqualVolume}
\end{figure}

\end{remark}

\subsection{Discrete affine focal set}

Take any $\lambda$ satisfying $\lambda(i)-\lambda(i+1)=\tau(i+\tfrac{1}{2})$ and define
\begin{equation}\label{eq:DefineEta}
\eta(i)=\phi''(i)+\lambda(i)\xi(i).
\end{equation}
Define also
\begin{equation}\label{eq:DefineMu}
\mu(i+\tfrac{1}{2})= \rho_1(i+1)+\sigma(i+\tfrac{1}{2})\lambda(i+1)=\rho_2(i)+\sigma(i+\tfrac{1}{2})\lambda(i).
\end{equation}

\medskip\noindent

\begin{lemma}
The following discrete counterpart of equation \eqref{eq:EtaParallel} holds:
\begin{equation}\label{eq:DiscEtaParallel}
\eta'(i+\tfrac{1}{2})=-\mu(i+\tfrac{1}{2})  \phi'(i+\tfrac{1}{2}).
\end{equation}
In particular, $\eta$ is parallel.
\end{lemma}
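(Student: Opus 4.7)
The plan is to compute $\eta'(i+\tfrac12)=\eta(i+1)-\eta(i)$ directly from the definition \eqref{eq:DefineEta} and reduce it using the discrete Frenet equations \eqref{eq:DiscFrenet1} together with the parallel Darboux relation \eqref{eq:DiscSigma}. Observe first that $\phi''(i+1)-\phi''(i)=\phi'''(i+\tfrac12)$, so
\begin{equation*}
\eta'(i+\tfrac12)=\phi'''(i+\tfrac12)+\lambda(i+1)\xi(i+1)-\lambda(i)\xi(i).
\end{equation*}

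The next step is to split the $\xi$-difference using a discrete Leibniz-type identity. I would write
\begin{equation*}
\lambda(i+1)\xi(i+1)-\lambda(i)\xi(i)=\lambda(i+1)\bigl(\xi(i+1)-\xi(i)\bigr)+\bigl(\lambda(i+1)-\lambda(i)\bigr)\xi(i),
\end{equation*}
and then substitute $\xi(i+1)-\xi(i)=-\sigma(i+\tfrac12)\phi'(i+\tfrac12)$ from \eqref{eq:DiscSigma} together with $\lambda(i+1)-\lambda(i)=-\tau(i+\tfrac12)$ from the definition of $\lambda$. This yields
\begin{equation*}
\lambda(i+1)\xi(i+1)-\lambda(i)\xi(i)=-\sigma(i+\tfrac12)\lambda(i+1)\phi'(i+\tfrac12)-\tau(i+\tfrac12)\xi(i).
\end{equation*}

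Now I plug in the second line of \eqref{eq:DiscFrenet1}, namely $\phi'''(i+\tfrac12)=-\rho_1(i+1)\phi'(i+\tfrac12)+\tau(i+\tfrac12)\xi(i)$. The two $\tau(i+\tfrac12)\xi(i)$ terms cancel, and what remains is
\begin{equation*}
\eta'(i+\tfrac12)=-\bigl(\rho_1(i+1)+\sigma(i+\tfrac12)\lambda(i+1)\bigr)\phi'(i+\tfrac12)=-\mu(i+\tfrac12)\phi'(i+\tfrac12),
\end{equation*}
which is exactly \eqref{eq:DiscEtaParallel}. Using the Leibniz split in the other form, with $\xi(i+1)$ pulled out instead of $\xi(i)$, and applying the first line of \eqref{eq:DiscFrenet1}, reproduces the same identity with $\rho_2(i)+\sigma(i+\tfrac12)\lambda(i)$, so the two expressions for $\mu$ in \eqref{eq:DefineMu} are automatically consistent (and indeed their equality is precisely the compatibility relation \eqref{eq:Compatibility}).

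There is no real obstacle here: the proof is a one-line computation once the correct discrete product-rule split is chosen. The only minor subtlety is that the discrete Leibniz rule is asymmetric, so one must decide whether to factor $\lambda$ at index $i$ or $i+1$; matching the choice of factorization to the corresponding Frenet equation in \eqref{eq:DiscFrenet1} is what makes the cancellation work cleanly.
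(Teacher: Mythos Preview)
Your proof is correct and follows essentially the same route as the paper's: a discrete Leibniz split on $\lambda\xi$, followed by substitution of the matching line of \eqref{eq:DiscFrenet1} and of \eqref{eq:DiscSigma}. The only cosmetic difference is that the paper chooses the other asymmetric split (factoring out $\lambda(i)$ and $\xi(i+1)$) and accordingly uses the first Frenet line to land on the $\rho_2(i)+\sigma(i+\tfrac12)\lambda(i)$ form of $\mu$, whereas you land on the $\rho_1(i+1)+\sigma(i+\tfrac12)\lambda(i+1)$ form; as you correctly observe, these are the two halves of the same computation, related by \eqref{eq:Compatibility}.
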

\begin{proof}
We have that
$$
\eta'(i+\tfrac{1}{2})=\phi'''(i+\tfrac{1}{2})+\lambda'(i+\tfrac{1}{2})\xi(i+1)+\lambda(i)\xi'(i+\tfrac{1}{2})
$$
$$
=-\rho_2(i)\phi'(i+\tfrac{1}{2})+\lambda(i)\xi'(i+\tfrac{1}{2})=-\left(\rho_2(i)+\sigma(i+\tfrac{1}{2})\lambda(i)\right)\phi'(i+\tfrac{1}{2}),
$$
thus proving the lemma.
\end{proof}

Define
\begin{equation}\label{eq:DefineQ}
Q(i+\tfrac{1}{2})=\phi(i)+\mu^{-1}(i+\tfrac{1}{2})\eta(i)=\phi(i+1)+\mu^{-1}(i+\tfrac{1}{2})\eta(i+1),
\end{equation}
and denote by $l(i+\tfrac{1}{2})$ the line connecting $O(i+\tfrac{1}{2})$ and $Q(i+\tfrac{1}{2})$, where 
$O(i+\tfrac{1}{2})$ is defined by equation \eqref{eq:DefineO}. 

The {\it discrete affine focal set} $\B$ is the polyhedron with edges $l(i+\tfrac{1}{2})$, $i=1,...,N-1$, and faces contained in $\A(i)$, $i=2,...,N-1$, bounded 
by $l(i-\tfrac{1}{2})$ and $l(i+\tfrac{1}{2})$ containing the segments $Q(i-\tfrac{1}{2})Q(i+\tfrac{1}{2})$ and $O(i-\tfrac{1}{2})O(i+\tfrac{1}{2})$ (see Figure \ref{fig:DiscreteFocalSet}).

\begin{figure}[htb]
 \centering
 \includegraphics[width=0.80\linewidth]{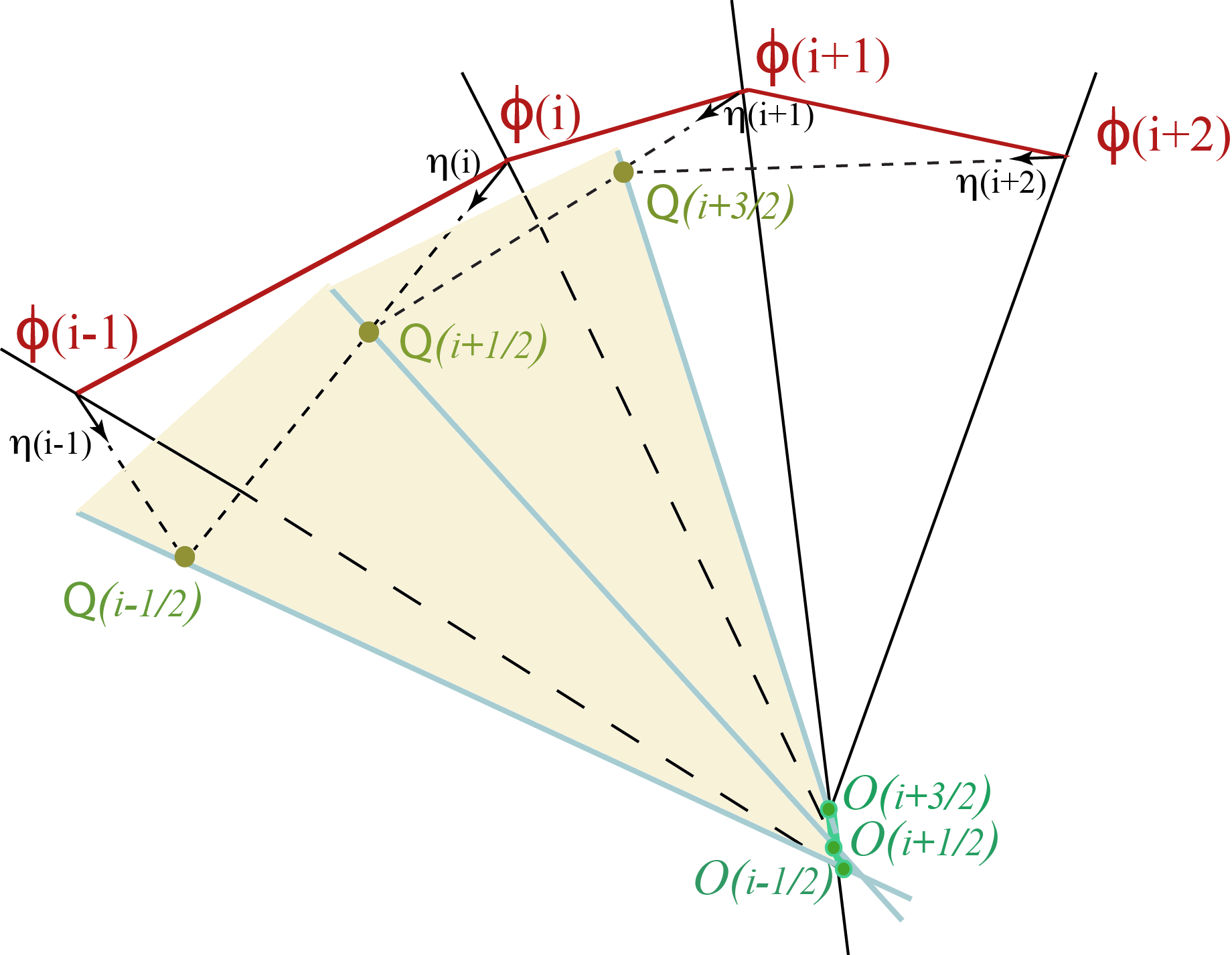}
 \caption{Discrete affine focal set}
\label{fig:DiscreteFocalSet}
\end{figure}

\begin{Proposition}\label{prop:DiscSingleLine}
The following statements are equivalent:
\begin{enumerate}
\item $\sigma$ and $\mu$ are constant.
\item The points $O$ and $Q$ are fixed. 
\item The discrete affine focal set $\B$ reduces to a single line. 
\end{enumerate}
\end{Proposition}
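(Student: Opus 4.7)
The plan is to reduce the proposition to two separate equivalences (one for $\sigma,O$, one for $\mu,Q$) and then glue them together using non-degeneracy. Proposition~\ref{prop:DiscreteODS} already gives the $\sigma$--$O$ half of (1)$\Leftrightarrow$(2); I would first establish the analogous $\mu$--$Q$ half. Writing $Q(i+\tfrac{1}{2})$ using the two expressions in \eqref{eq:DefineQ}, one obtains the counterpart of \eqref{eq:DerivO}, namely
\begin{equation*}
Q(i+\tfrac{1}{2})-Q(i-\tfrac{1}{2})=\bigl(\mu^{-1}(i+\tfrac{1}{2})-\mu^{-1}(i-\tfrac{1}{2})\bigr)\eta(i).
\end{equation*}
Since $\eta(i)\neq 0$, this gives $Q$ fixed $\Leftrightarrow$ $\mu$ constant, and combined with Proposition~\ref{prop:DiscreteODS} finishes (1)$\Leftrightarrow$(2).

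The implication (2)$\Rightarrow$(3) is immediate: if the points $O$ and $Q$ do not depend on $i$, the line $l(i+\tfrac{1}{2})$ through them is the same for every $i$, and $\B$ collapses to this single line.

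The main step is (3)$\Rightarrow$(2), and it is where I expect the only real work. Suppose $\B$ reduces to a single line $l$, so that $l(i+\tfrac{1}{2})=l$ and hence $O(i\pm\tfrac{1}{2}),\ Q(i\pm\tfrac{1}{2})\in l$ for every $i$. From \eqref{eq:DefineO} and \eqref{eq:DefineQ}, the two points $O(i-\tfrac{1}{2}), O(i+\tfrac{1}{2})$ lie on the line $L_\xi(i):=\phi(i)+\R\xi(i)$ and the two points $Q(i-\tfrac{1}{2}), Q(i+\tfrac{1}{2})$ lie on $L_\eta(i):=\phi(i)+\R\eta(i)$. The obstacle is to rule out the possibility that $O$ (or $Q$) slides non-trivially along $l$; this is where non-degeneracy enters. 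Because $\phi\subset M$ is non-degenerate, $\phi''(i)$ is transverse to $T_{i-1/2}M\cap T_{i+1/2}M$, hence $\eta(i)=\phi''(i)+\lambda(i)\xi(i)$ is linearly independent from $\xi(i)$, so $L_\xi(i)\cap L_\eta(i)=\{\phi(i)\}$. If $O(i-\tfrac{1}{2})\neq O(i+\tfrac{1}{2})$, then $l$ is forced to equal $L_\xi(i)$, and then $Q(i+\tfrac{1}{2})\in l\cap L_\eta(i)=\{\phi(i)\}$ gives $\mu^{-1}(i+\tfrac{1}{2})\eta(i)=0$, a contradiction. Therefore $O(i-\tfrac{1}{2})=O(i+\tfrac{1}{2})$ for every $i$, and the symmetric argument gives the same conclusion for $Q$, yielding (2). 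Concatenating, (1)$\Leftrightarrow$(2)$\Leftrightarrow$(3).
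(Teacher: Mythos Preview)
Your proof is correct and follows essentially the same strategy as the paper's: both derive the difference formula $Q(i+\tfrac{1}{2})-Q(i-\tfrac{1}{2})=(\mu^{-1}(i+\tfrac{1}{2})-\mu^{-1}(i-\tfrac{1}{2}))\eta(i)$ to get the $\mu$--$Q$ half of (1)$\Leftrightarrow$(2), and both obtain (3)$\Rightarrow$(2) by contradiction using that $\xi(i)$ and $\eta(i)$ are linearly independent. The paper phrases the last step more tersely (``$O$ or $Q$ are not changing in the direction of $Q-O$''), whereas you make the role of non-degeneracy and the geometry of the lines $L_\xi(i)$, $L_\eta(i)$ explicit; the underlying argument is the same.
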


\begin{proof}
By proposition \ref{prop:DiscreteODS}, $\sigma$ constant is equivalent to $O$ fixed. From equation \eqref{eq:DefineQ} we obtain
\begin{equation}\label{eq:DerivQ}
Q(i+\tfrac{1}{2})-Q(i-\tfrac{1}{2})=\left(  \mu^{-1}(i+\tfrac{1}{2})-  \mu^{-1}(i-\tfrac{1}{2})   \right) \eta(i),
\end{equation}
which implies that $\mu$ is constant if and only if $Q$ is fixed. Thus (1) and (2) are equivalent. It is obvious that (2) implies (3) and so it remains to prove that (3) implies (2).
If $O$ and $Q$ were not both fixed, then equations \eqref{eq:DerivO} and \eqref{eq:DerivQ} say that $O$ or $Q$ are not changing in the direction of $Q-O$. Thus
$\B$ would not be a single line.
\end{proof}

\subsection{Planar polygons} 

\begin{lemma}
A polygon $\phi$ is contained in a plane $L$ if and only if $\tau=0$.
\end{lemma}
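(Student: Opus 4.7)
My plan is to handle the two implications separately, using the Frenet-type formulas \eqref{eq:DiscFrenet1} as the central tool.

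For the forward direction ($\phi \subset L$ implies $\tau \equiv 0$), I would first observe that if every vertex $\phi(i)$ lies in an affine plane $L$, then each difference vector $\phi'(i+\tfrac{1}{2})$, and therefore each second difference $\phi''(i)$ and third difference $\phi'''(i+\tfrac{1}{2})$, is parallel to the linear direction of $L$. The key consistency point is that $\xi(i)$ cannot be parallel to $L$: indeed, $\phi'(i-\tfrac{1}{2})$ and $\phi'(i+\tfrac{1}{2})$ both lie in $L$, so if $\xi(i)$ did too, the determinant on the left of \eqref{eq:DiscEqualVolumeDarboux} would vanish instead of equalling $1$. Then I would read off the first line of \eqref{eq:DiscFrenet1}: since $\phi'''(i+\tfrac{1}{2})$ and $\phi'(i+\tfrac{1}{2})$ are parallel to $L$ but $\xi(i+1)$ is transverse to $L$, the coefficient $\tau(i+\tfrac{1}{2})$ must be zero.

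For the reverse direction ($\tau \equiv 0$ implies planarity), I would use the fact that \eqref{eq:DiscFrenet1} collapses to $\phi'''(i+\tfrac{1}{2}) = -\rho_2(i)\,\phi'(i+\tfrac{1}{2})$, so $\phi''(i+1) = \phi''(i) - \rho_2(i)\,\phi'(i+\tfrac{1}{2})$. I would then prove by induction on $i$ that the two-dimensional span $V(i) = \operatorname{span}\bigl(\phi'(i-\tfrac{1}{2}),\,\phi'(i+\tfrac{1}{2})\bigr)$ is independent of $i$. For the inductive step, write
\begin{equation*}
\phi'(i+\tfrac{3}{2}) = \phi'(i+\tfrac{1}{2}) + \phi''(i+1) = \phi'(i+\tfrac{1}{2}) + \phi''(i) - \rho_2(i)\,\phi'(i+\tfrac{1}{2}),
\end{equation*}
and note that $\phi''(i) = \phi'(i+\tfrac{1}{2}) - \phi'(i-\tfrac{1}{2}) \in V(i)$, so $\phi'(i+\tfrac{3}{2}) \in V(i)$. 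Hence $V(i+1) \subset V(i)$, and by non-degeneracy both are two-dimensional, giving $V(i+1) = V(i)$. The common linear plane $V$ together with the initial vertex $\phi(1)$ then determines an affine plane $L$ containing all vertices $\phi(i)$.

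The routine bookkeeping is the induction, and the only subtle point is the non-degeneracy hypothesis required to know that the consecutive sides $\phi'(i-\tfrac{1}{2})$ and $\phi'(i+\tfrac{1}{2})$ are linearly independent (so that $V(i)$ is genuinely a plane rather than a line); this is the discrete analogue of the smooth non-degeneracy of the osculating plane and is the main obstacle to verify cleanly, but it is already in force throughout the paper.
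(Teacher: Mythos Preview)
Your argument is correct and follows essentially the same route as the paper's one-line proof, which simply records that $\tau(i+\tfrac{1}{2})=0$ if and only if the four consecutive vertices $\phi(i-1),\phi(i),\phi(i+1),\phi(i+2)$ are coplanar; your induction on the spans $V(i)$ is exactly the explicit unpacking of that observation. One remark: your worry about non-degeneracy is unnecessary, since the equal-volume condition $[\phi'(i-\tfrac{1}{2}),\phi'(i+\tfrac{1}{2}),\xi(i)]=1$ already forces $\phi'(i-\tfrac{1}{2})$ and $\phi'(i+\tfrac{1}{2})$ to be linearly independent.
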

\begin{proof}
Observe that $\tau(i+\tfrac{1}{2})=0$ if and only if the points $\phi(i-1)$, $\phi(i)$, $\phi(i+1)$ and $\phi(i+2)$ are co-planar. 
\end{proof}

Denote by $n$ a euclidean unitary normal to $L$ and let $\xi$ be the vector field in the Darboux direction such that $\xi\cdot n=1$, where $\cdot$ denotes the usual inner product.  
Then $\xi$ is a parallel Darboux vector field. In this case equation \eqref{eq:DiscEqualVolumeDarboux} can be written as 
\begin{equation*}\label{eq:EqualArea}
\left[\phi'(i-\tfrac{1}{2}),\phi'(i+\tfrac{1}{2}) \right]=1,
\end{equation*}
where $[\cdot,\cdot]$ denotes determinant in the plane $L$. Thus $\phi\subset L$ is an equal-area polygon and $\rho$ is its discrete affine curvature (\cite{Craizer13},\cite{Kaferbock14}). 
The set $\B\cap L$ is exactly the discrete affine evolute of the planar equal-area polygon $\phi\subset L$ (\cite{Craizer13}) (see Figure \ref{fig:Planar}).

\begin{figure}[htb]
 \centering
 \includegraphics[width=0.80\linewidth]{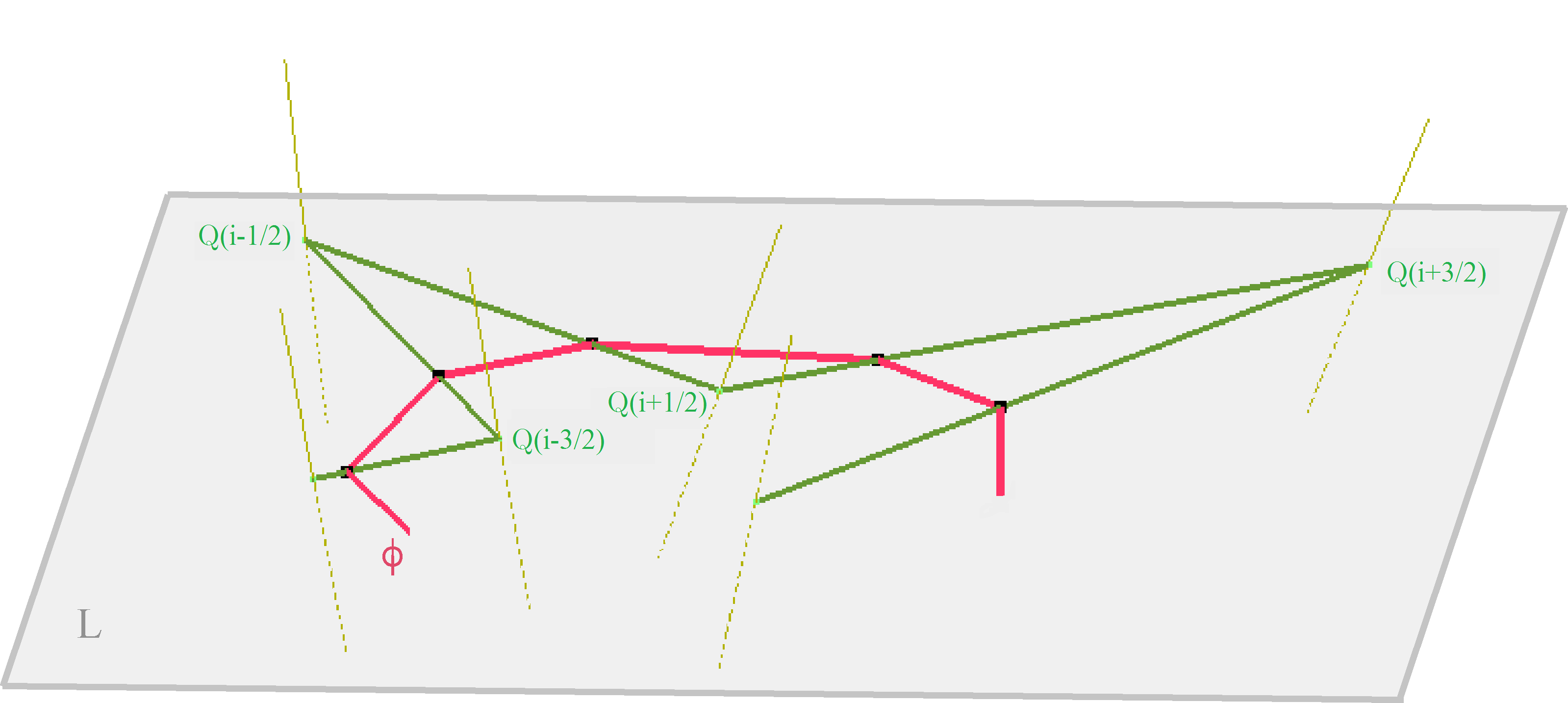}
 \caption{For a planar curve $\phi$, the set $\B\cap L$ coincides with the discrete affine evolute of $\phi$.}
\label{fig:Planar}
\end{figure}

\subsection{Silhouette polygons}

Assume that  $\phi\subset\R^3$ is a silhouette polygon from the point of view of $O$, that we assume to be the origin. In this case, equation \eqref{eq:DiscEqualVolumeDarboux}
becomes equation \eqref{eq:EqualVolumeCA}. By Lemma \ref{lemma:Phi3LinhasTangente}, this condition is equivalent
to $\phi'''(i+\tfrac{1}{2})$ belongs to the plane generated by $\phi(i)$ and $\phi(i+1)$.

Since $\xi(i)=\phi(i)$, we have that $\sigma(i+\tfrac{1}{2})=-1$, $i=1,...,N$. The Frenet equations \eqref{eq:DiscFrenet1} reduce to
\begin{equation}\label{eq:DiscFrenetCA1}
\left\{
\begin{array}{c}
\phi'''(i+\tfrac{1}{2})=-\rho_2(i)\phi'(i+\tfrac{1}{2})+\tau(i+\tfrac{1}{2})\phi(i+1)\\
\phi'''(i+\tfrac{1}{2})=-\rho_1(i+1)\phi'(i+\tfrac{1}{2})+\tau(i+\tfrac{1}{2})\phi(i),
\end{array}
\right.
\end{equation}
while equation \eqref{eq:DefineMu} becomes
\begin{equation}\label{eq:DiscMu}
\mu(i+\tfrac{1}{2})=\rho_1(i+1)-\lambda(i+1)=\rho_2(i)-\lambda(i).
\end{equation}
We have also that
\begin{equation}\label{eq:DiscMuLinha}
\mu'(i)=\rho_1'(i+\tfrac{1}{2})+\tau(i+\tfrac{1}{2})= \rho_2'(i-\tfrac{1}{2})+\tau(i-\tfrac{1}{2}).
\end{equation}

\subsection{Polygons whose discrete affine focal set reduces to a line}

By proposition \ref{prop:DiscSingleLine}, $\B$ reduces to a single line if and only if $\mu$ and $\sigma$ are constant. Since $\sigma$ 
is constant, $\phi$ is a silhouette polygon. By formula \eqref{eq:DiscMuLinha}, the condition $\mu$ constant is equivalent 
to $\rho_1'+\tau=\rho_2'+\tau=0$.

Assume $\mu(i+\tfrac{1}{2})=\mu_0$ constant. Then equation \eqref{eq:DiscEtaParallel} implies that
$$
\eta(i)=-\mu_0 \phi(i)+Q,
$$
for some constant vector $Q$. Assume $Q=(0,0,1)$ and write $\phi(i)=\left(  \gamma(i), z(i)  \right)$. Then, using equation \eqref{eq:DefineEta} we obtain 
$$
\gamma''(i)+\lambda(i) \gamma(i)=-\mu_0 \gamma(i), \ \ z''(i)+\lambda(i)z(i)=-\mu_0z(i)+1,
$$
and so
\begin{equation}\label{eq:Difference}
\gamma''(i)=-(\lambda(i)+\mu_0)\gamma(i),\ \ z''(i)=-(\lambda(i)+\mu_0)z(i)+1.
\end{equation}
Observe that
$$
\left[ \gamma(i),\gamma(i+1)\right] -\left[ \gamma(i-1),\gamma(i)\right] =\left[\gamma(i),\gamma''(i) \right]=0, 
$$
and so $[\gamma(i),\gamma(i+1)]=c$, for some constant $c$. By rescaling $\phi$ we may assume that $c=1$.

Denote by $\Gamma(i+\tfrac{1}{2})$ a polygon such that $\Gamma'(i)=\gamma(i)$. 
Then 
$$
\Gamma'''(i)=-(\lambda(i)+\mu_0)\Gamma'(i),
$$ 
and so $\Gamma$ is an equal-area polygon with discrete affine curvature $\lambda(i)+\mu_0$. The {\it affine distance} or {\it support function} of $\Gamma$ with respect to a point $P\in\R^2$ is given by 
\begin{equation}\label{eq:Discz}
z(i)=\left[\Gamma(i+\tfrac{1}{2})-P,\gamma(i)\right]=\left[\Gamma(i-\tfrac{1}{2})-P,\gamma(i)\right]
\end{equation}
(see Figure \ref{fig:Support}, left).

\begin{figure}[htb]
\centering
\subfigure{
\includegraphics[width=.45\textwidth]{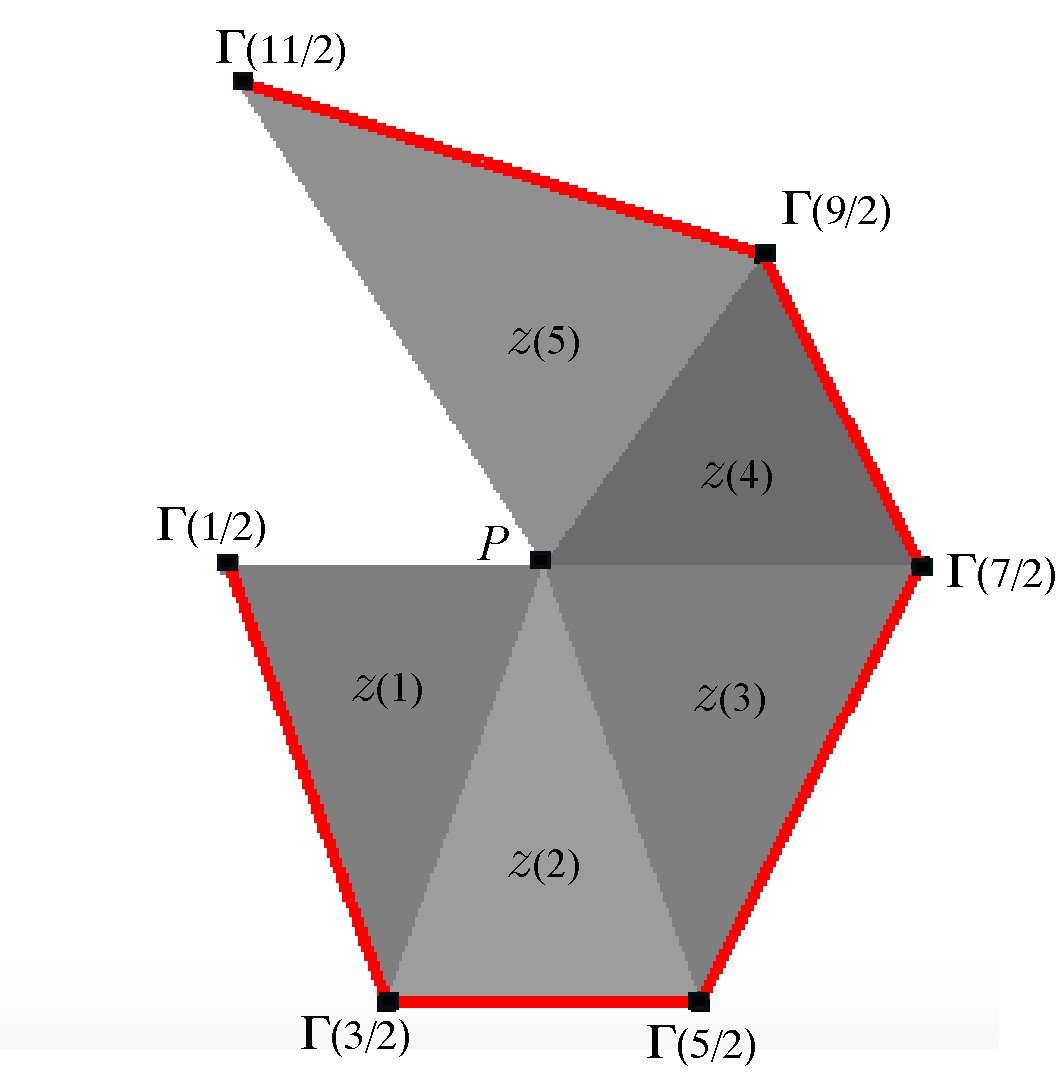}  }
% \subcaption{a. Support function of $\Gamma$.} }
\subfigure{
\includegraphics[width=.45\textwidth]{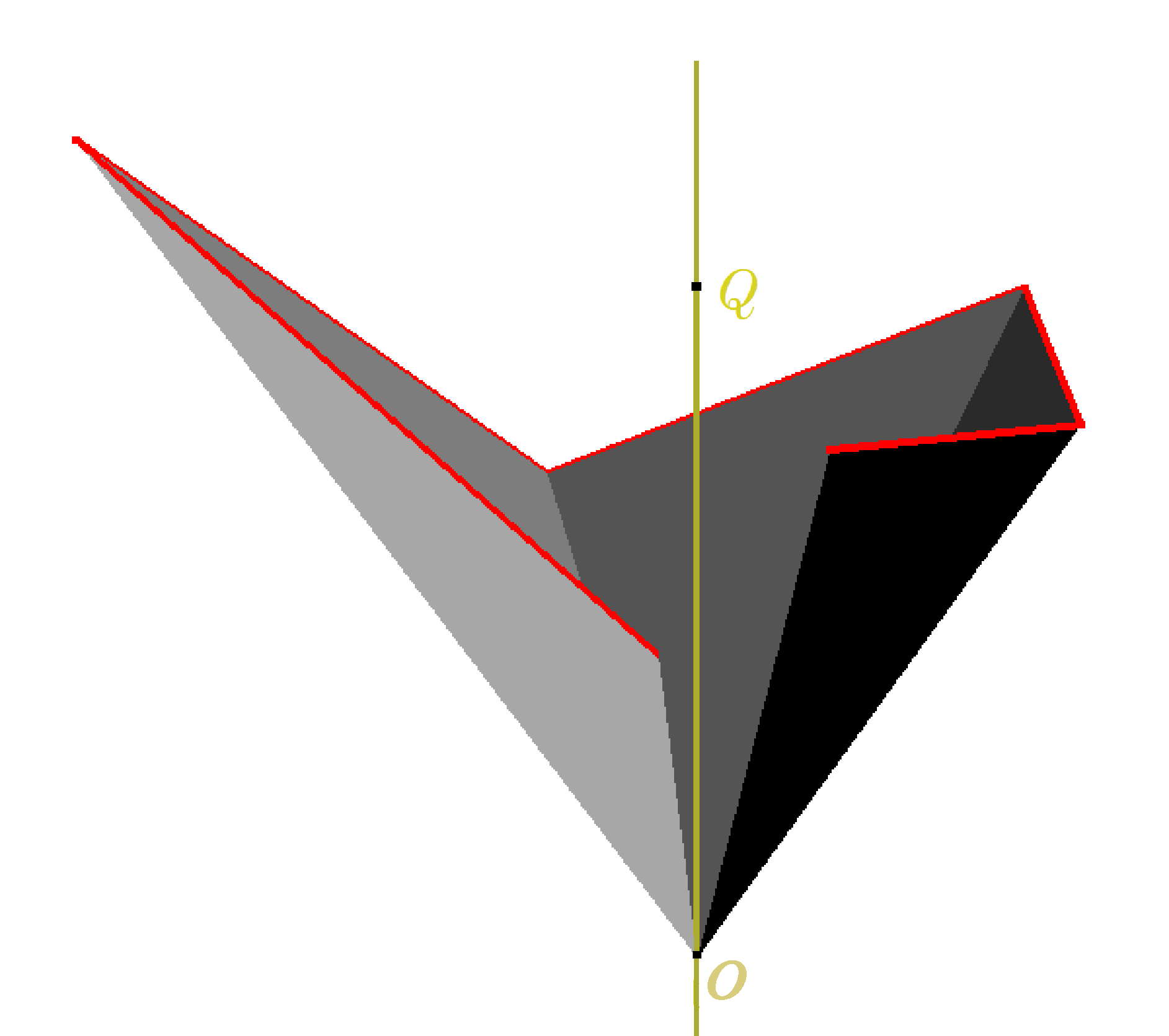} }
%  subcaption{b. Affine focal set.} }
% }
\caption{A planar equal-area polygon $\Gamma$ and its support function $z$ (left). The corresponding spatial curve $\phi$ and its affine focal set (right).}
\label{fig:Support}
\end{figure}

\begin{Proposition}\label{prop:Main}
The polygonal line $\phi(i)=(\gamma(i),z(i))$ (see Figure \ref{fig:Support}, right) satisfies equation \eqref{eq:Difference}, and conversely, any solution 
of the difference equation \eqref{eq:Difference} is obtained by this construction, for some planar polygon $\Gamma(i+\tfrac{1}{2})$ and some point $P\in\R^2$.
\end{Proposition}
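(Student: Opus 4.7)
\emph{Forward direction.} Assume we start from the planar equal-area polygon $\Gamma$ with discrete affine curvature $\lambda(i)+\mu_0$ and set $\gamma(i)=\Gamma'(i)$, $z(i)=[\Gamma(i+\tfrac{1}{2})-P,\gamma(i)]$. The first equation in \eqref{eq:Difference} for $\gamma$ is immediate: $\gamma''(i)=\Gamma'''(i)=-(\lambda(i)+\mu_0)\Gamma'(i)=-(\lambda(i)+\mu_0)\gamma(i)$. For the $z$ equation I would use the alternative form $z(i)=[\Gamma(i-\tfrac{1}{2})-P,\gamma(i)]$ from \eqref{eq:Discz} (which is consistent because $\Gamma(i+\tfrac{1}{2})-\Gamma(i-\tfrac{1}{2})=\gamma(i)$). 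Expanding $z'(i+\tfrac{1}{2})=z(i+1)-z(i)$ by splitting $[\Gamma(i+\tfrac{1}{2})-P,\gamma(i+1)]-[\Gamma(i-\tfrac{1}{2})-P,\gamma(i)]$ into a linearity term and a base-point term, and noticing that $[\gamma(i),\gamma(i)]=0$, gives $z'(i+\tfrac{1}{2})=[\Gamma(i+\tfrac{1}{2})-P,\gamma'(i+\tfrac{1}{2})]$. Differencing once more and invoking $\gamma''(i)=-(\lambda(i)+\mu_0)\gamma(i)$ and the normalization $[\gamma(i-1),\gamma(i)]=1$ produces
\begin{equation*}
z''(i)=[\Gamma(i+\tfrac{1}{2})-P,\gamma''(i)]+[\gamma(i),\gamma'(i-\tfrac{1}{2})]=-(\lambda(i)+\mu_0)z(i)+1,
\end{equation*}
which is exactly the second equation of \eqref{eq:Difference}.

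\emph{Converse direction.} Given any solution $(\gamma,z)$ of \eqref{eq:Difference} (with $[\gamma(i),\gamma(i+1)]=1$, as established in the paragraph preceding the proposition), define $\Gamma(i+\tfrac{1}{2})$ by discrete integration so that $\Gamma'(i)=\gamma(i)$; this determines $\Gamma$ up to a translation. Then $\Gamma$ is automatically equal-area with affine curvature $\lambda(i)+\mu_0$. To locate $P$, fix any choice and define $w(i)=[\Gamma(i+\tfrac{1}{2}),\gamma(i)]$; by the forward calculation $w$ is a particular solution of the same inhomogeneous scalar difference equation $w''(i)+(\lambda(i)+\mu_0)w(i)=1$ satisfied by $z$. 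Hence $z-w$ solves the homogeneous equation.

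\emph{Main step: matching $P$.} The key observation is that the scalar homogeneous equation $u''(i)+(\lambda(i)+\mu_0)u(i)=0$ has a two-dimensional solution space, and both coordinates $\gamma_1(i),\gamma_2(i)$ of $\gamma$ satisfy it; they are independent because $[\gamma(i),\gamma(i+1)]=1\ne 0$, so $\{\gamma_1,\gamma_2\}$ is a basis. Consequently, every homogeneous solution can be written as $-[P,\gamma(i)]=P_2\gamma_1(i)-P_1\gamma_2(i)$ for a unique $P=(P_1,P_2)\in\R^2$. Choosing $P$ so that $z(i)-w(i)=-[P,\gamma(i)]$ yields
\begin{equation*}
z(i)=[\Gamma(i+\tfrac{1}{2})-P,\gamma(i)],
\end{equation*}
as required. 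The main obstacle is exactly this last step: one must recognize that adjusting $P$ sweeps out the whole space of homogeneous solutions, so that the prescribed support function can always be realized. Once the basis property of the components of $\gamma$ is in hand, the conclusion is immediate.
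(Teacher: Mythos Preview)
Your proof is correct and follows essentially the same route as the paper. The forward computation of $z''(i)$ is the same up to which base point ($\Gamma(i+\tfrac12)$ versus $\Gamma(i-\tfrac12)$) you anchor the telescoping, and your converse is simply a more explicit rendering of the paper's one-line dimension count ``since $P$ has two degrees of freedom, this is the general solution''---you make precise why the two degrees of freedom in $P$ actually sweep out all homogeneous solutions by exhibiting $\gamma_1,\gamma_2$ as an independent pair.
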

\begin{proof}
Observe first that
$$
z'(i+\tfrac{1}{2})= \left[ \Gamma(i+\tfrac{1}{2})-P, \gamma(i+1) \right]-\left[ \Gamma(i+\tfrac{1}{2})-P, \gamma(i) \right]
$$
$$
=\left[  \Gamma(i+\tfrac{1}{2})-P, \gamma'(i+\tfrac{1}{2} )   \right].
$$
Thus 
$$
z''(i)=\left[  \Gamma(i+\tfrac{1}{2})-P, \gamma'(i+\tfrac{1}{2} )   \right]-\left[  \Gamma(i-\tfrac{1}{2})-P, \gamma'(i-\tfrac{1}{2} )   \right]
$$
$$
=\left[ \gamma(i), \gamma'(i+\tfrac{1}{2}) \right]+\left[  \Gamma(i-\tfrac{1}{2})-P, \gamma''(i)   \right]
$$
$$
=1-(\lambda(i)+\mu_0)z(i),
$$
thus proving that $(\phi(i),z(i))$ satisfies equation \eqref{eq:Difference}. Since 
$P$ has two degrees of freedom, this is the general solution of the second order difference equation \eqref{eq:Difference}.
\end{proof}

\section{Polygons in $3$-space}

In this section, we obtain discrete counterparts of the results of section \ref{sec:SmoothSpatial}. Consider a polygon $\Phi(i+\tfrac{1}{2})$ in $3$-space, 
without being contained in any polyhedron $M$. The polygon $\Phi$ is equal-volume, i.e., satisfies equation \eqref{eq:EqualVolume}, if and only if 
the difference polygon $\phi(i)=\Phi'(i)$ is equal-volume with respect to the origin.

\subsection{Frenet equations}

For equal-volume polygons $\Phi$, Frenet equations \eqref{eq:DiscFrenetCA1} are written as 
\begin{equation}\label{eq:DiscFrenetSP1}
\left\{
\begin{array}{c}
\Phi''''(i+\tfrac{1}{2})=-\rho_2(i)\Phi''(i+\tfrac{1}{2})+\tau(i+\tfrac{1}{2})\Phi'(i+1)\\
\Phi''''(i+\tfrac{1}{2})=-\rho_1(i+1)\Phi''(i+\tfrac{1}{2})+\tau(i+\tfrac{1}{2})\Phi'(i).
\end{array}
\right.
\end{equation}
Defining $\mu(i+\tfrac{1}{2})$ by equation \eqref{eq:DiscMu}, equation \eqref{eq:DiscMuLinha} still holds.
It is not clear how to define a discrete version of the intrinsic affine binormal developable.

\subsection{Polygons with $\mu$ constant}

Consider an equal area planar polygon $\Gamma(i+\tfrac{1}{2})$ and let $Z(i+\tfrac{1}{2})$ be given by 
\begin{equation*} \label{eq:DiscZ}
Z(i+\tfrac{1}{2})=\sum_{j=1}^{i} z(j),
\end{equation*}
where $z(i)$ is given by equation \eqref{eq:Discz}, for some point $P\in\R^2$. Then $Z(i+\tfrac{1}{2})$ represents the area of the planar region bounded by 
$\Gamma'(j)$, $j=1...i$, and the segments $P\Gamma(\tfrac{1}{2})$ and 
$P\Gamma(i+\tfrac{1}{2})$ (see Figure \ref{fig:AreaZ}). In this context, Proposition \ref{prop:Main} can be written as follows:

\begin{Proposition}
The polygon $\Phi=(\Gamma,Z)$ has constant $\mu$, and conversely, any equal-volume polygon $\Phi$ with constant $\mu$ 
is obtained by this construction, for some planar polygonal line $\Gamma$ and some point $P\in\R^2$.
\end{Proposition}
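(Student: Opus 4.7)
The plan is to reduce this proposition directly to Proposition \ref{prop:Main} via the correspondence $\phi(i)=\Phi'(i)$. Recall from the opening of Section 4 that $\Phi$ is equal-volume in $3$-space if and only if $\phi$ is equal-volume with respect to the origin; in that silhouette setting $\sigma(i+\tfrac{1}{2})\equiv -1$, and the value of $\mu$ for $\Phi$, defined via \eqref{eq:DiscMu} applied to $\phi=\Phi'$, is inherited by $\Phi$ itself. So ``constant $\mu$ for $\Phi$'' translates to ``constant $\mu$ for $\phi$'', which is exactly the hypothesis of Proposition \ref{prop:Main}.

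For the forward implication, I would compute $\Phi'$ coordinate-wise. The first coordinate is $\Gamma'(i)=\gamma(i)$ by construction of $\Gamma$. For the second coordinate, the definition $Z(i+\tfrac{1}{2})=\sum_{j=1}^{i}z(j)$ telescopes to $Z'(i)=Z(i+\tfrac{1}{2})-Z(i-\tfrac{1}{2})=z(i)$, so $\Phi'(i)=(\gamma(i),z(i))$. Proposition \ref{prop:Main} then asserts that this $\phi=(\gamma,z)$ satisfies the difference equation \eqref{eq:Difference}, which is precisely the characterization derived for ``$\mu$ constant'' in Section 3.7 with $\mu_0=\mu$. Hence $\Phi$ has constant $\mu$.

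For the converse, start with an equal-volume $\Phi$ having constant $\mu$ and put $\phi=\Phi'$. Then $\phi$ is an equal-volume silhouette polygon whose $\mu$ is constant, so the converse part of Proposition \ref{prop:Main} supplies a planar equal-area polygon $\Gamma(i+\tfrac{1}{2})$ with $\Gamma'(i)=\gamma(i)$ and a point $P\in\R^2$ such that $\phi(i)=(\gamma(i),z(i))$, where $z$ is the support function of $\Gamma$ with respect to $P$ as in \eqref{eq:Discz}. Summing the identity $\Phi'(j)=(\gamma(j),z(j))$ from $j=1$ to $i$ and using the telescoping formula $\Phi(i+\tfrac{1}{2})=\Phi(\tfrac{1}{2})+\sum_{j=1}^{i}\Phi'(j)$ recovers $\Phi=(\Gamma,Z)$ up to an additive constant vector, which can be absorbed by translating $\Gamma$ in the plane (for the first two components) and shifting the summation origin for $Z$ (for the third).

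The one thing that needs care, rather than being a genuine obstacle, is the bookkeeping between integer indices (for $\phi$, $\gamma$, $z$) and half-integer indices (for $\Phi$, $\Gamma$, $Z$), together with the verification $Z'(i)=z(i)$; once this is clean, everything follows mechanically from Proposition \ref{prop:Main} and there is no new analytic content to produce.
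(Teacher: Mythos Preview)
Your proposal is correct and matches the paper's approach: the paper presents this proposition explicitly as a restatement of Proposition~\ref{prop:Main} (``In this context, Proposition~\ref{prop:Main} can be written as follows'') and gives no separate proof, so the reduction via $\phi(i)=\Phi'(i)$ together with the telescoping identity $Z'(i)=z(i)$ is exactly what is intended. Your remarks on absorbing the additive constants in the converse are the only details the paper leaves implicit, and you handle them appropriately.
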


\begin{figure}[htb]
 \centering
 \includegraphics[width=0.45\linewidth]{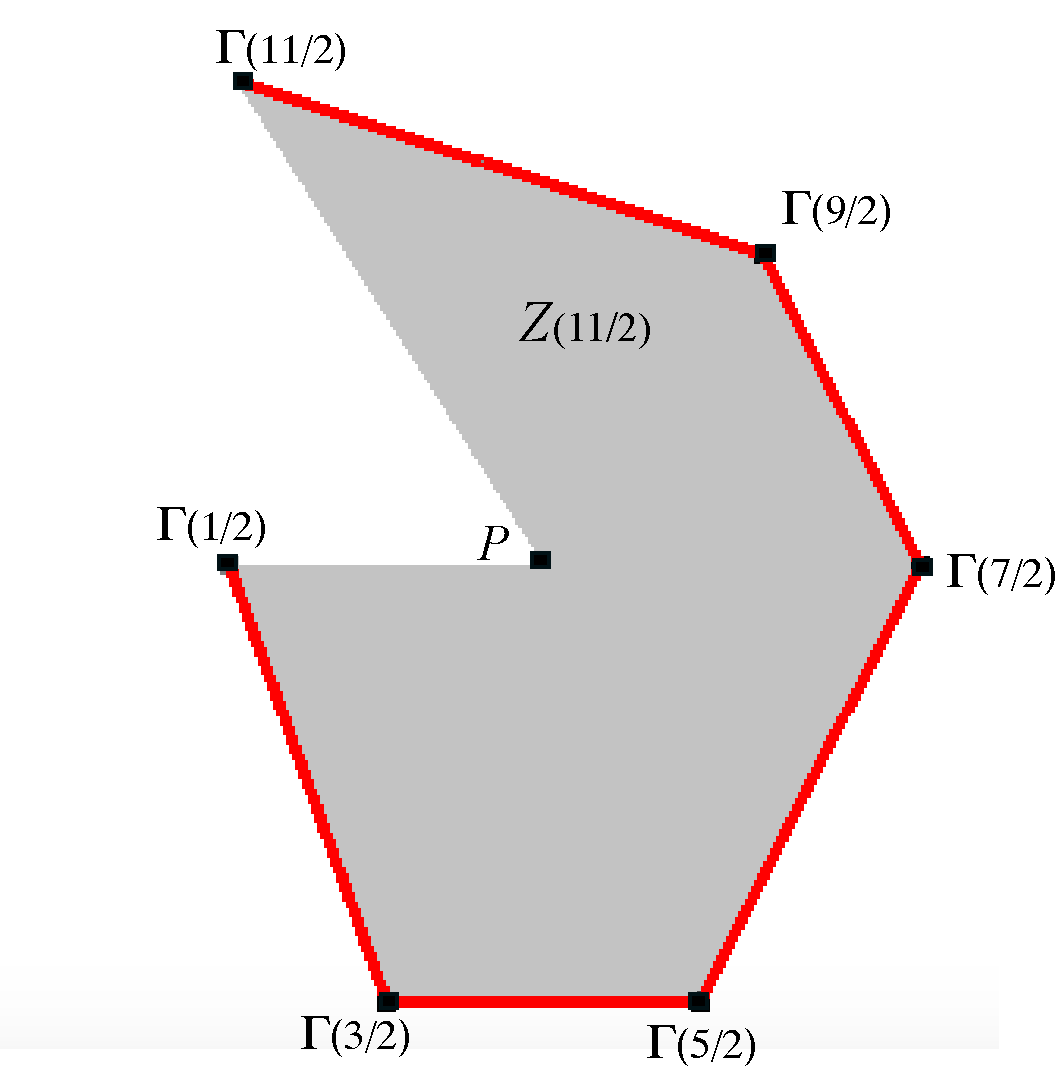}
 \caption{A planar equal-area polygon $\Gamma$ and the area \newline represented by $Z(\tfrac{11}{2})$. }
\label{fig:AreaZ}
\end{figure}

\section{Projective polygons}

In this section, we obtain discrete counterparts of the results of section \ref{sec:SmoothProjective}. 

Consider a planar polygon $\tilde\phi(i)$, $i=1,...,N$. Assume that
\begin{equation}\label{eq:DiscConvex}
\left[\tilde\phi'(i-\tfrac{1}{2}), \tilde\phi'(i+\tfrac{1}{2})  \right]=b(i)> 0.
\end{equation}

\subsection{Equal-volume representative}

Any polygon $\phi$ in $\R^3$ of the form $\phi(i)=a(i)\left(\tilde\phi(i),1\right)$, $a(i)> 0$, is a projective representative of $\tilde\phi$. 

\begin{lemma}
There exists a projective representative $\phi$ of $\tilde\phi$ such that equation \eqref{eq:EqualVolumeCA} holds with $O$ equal to the origin.
\end{lemma}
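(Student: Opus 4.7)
The plan is to write $\phi(i)=a(i)(\tilde\phi(i),1)$ with unknown positive scalars $a(i)$ and reduce the equal-volume condition \eqref{eq:EqualVolumeCA} (with $O$ the origin) to an explicit three-term recursion on $a(i)$ that can be solved forward.

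First I would compute the determinant. By trilinearity,
$$
[\phi(i-1),\phi(i),\phi(i+1)] = a(i-1)a(i)a(i+1)\,\bigl[(\tilde\phi(i-1),1),(\tilde\phi(i),1),(\tilde\phi(i+1),1)\bigr].
$$
Subtracting the first row from the others collapses the last column and yields the planar determinant
$$
\bigl[\tilde\phi(i)-\tilde\phi(i-1),\ \tilde\phi(i+1)-\tilde\phi(i-1)\bigr]=\bigl[\tilde\phi'(i-\tfrac12),\ \tilde\phi'(i-\tfrac12)+\tilde\phi'(i+\tfrac12)\bigr]=\bigl[\tilde\phi'(i-\tfrac12),\tilde\phi'(i+\tfrac12)\bigr],
$$
which equals $b(i)$ by the convexity hypothesis \eqref{eq:DiscConvex}. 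So \eqref{eq:EqualVolumeCA} is equivalent to
$$
a(i-1)\,a(i)\,a(i+1)=\frac{1}{b(i)}, \qquad i=2,\ldots,N-1.
$$

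Next I would solve this recursion. Choose any positive initial values $a(1),a(2)>0$ and define inductively
$$
a(i+1)=\frac{1}{a(i-1)\,a(i)\,b(i)}.
$$
Since $b(i)>0$ by \eqref{eq:DiscConvex}, positivity is preserved at every step, so the $a(i)$ are all well-defined and positive; the resulting $\phi(i)=a(i)(\tilde\phi(i),1)$ is then a projective representative satisfying \eqref{eq:EqualVolumeCA}. (One observes in passing that the construction carries a two-parameter freedom, namely the choice of $a(1),a(2)$.)

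The proof is essentially computational and the only mildly delicate point is the determinant identity collapsing the $3\times 3$ form to $b(i)$; once that is in hand, the existence statement is nothing more than forward substitution in a first-order-type recursion, with positivity automatic from $b(i)>0$.
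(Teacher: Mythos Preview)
Your proof is correct and follows essentially the same route as the paper: write $\phi(i)=a(i)(\tilde\phi(i),1)$, reduce the $3\times 3$ determinant to $a(i-1)a(i)a(i+1)\,b(i)$, and solve the resulting three-term product relation by forward recursion from arbitrary positive initial values $a(1),a(2)$. The only cosmetic difference is that the paper writes the condition as $a(i-1)a(i)a(i+1)b(i)=c$ for a constant $c$, whereas you take $c=1$ directly (consistent with \eqref{eq:EqualVolumeCA}); your added justification of the determinant collapse is a welcome detail the paper leaves implicit.
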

\begin{proof}
Observe first that 
$$
\left[ \phi(i-1), \phi(i), \phi(i+1)  \right]=a(i-1)a(i)a(i+1) \left[\tilde\phi'(i-\tfrac{1}{2}), \tilde\phi'(i+\tfrac{1}{2})  \right].
$$
So we need to choose $a(i)$, $i=1...,N$ such that 
\begin{equation}\label{eq:ab}
a(i-1)a(i)a(i+1)b(i)=c, \ \ i=2,...,N-1, 
\end{equation}
for some constant $c$. Since by the hypothesis \eqref{eq:DiscConvex} $b(i)> 0$, given $a(1)>0$ and $a(2)>0$
we can find unique $a(i)>0$, $i=3,...,N$ such that \eqref{eq:ab} holds.
\end{proof}

Assume that $\phi$ is a representative of $\tilde\phi$ such that equation \eqref{eq:EqualVolumeCA} holds with $O$ equal to the origin (Figure \ref{fig:Projective}). Then, by lemma \ref{lemma:Phi3LinhasTangente}, 
$\phi'''(i+\tfrac{1}{2})$ belongs to the plane generated by $\{\phi(i),\phi(i+1)\}$. So 
we can use equations \eqref{eq:DiscFrenetCA1} to define $\rho_1(i)$, $\rho_2(i)$ and $\tau(i+\tfrac{1}{2})$.

\begin{figure}[htb]
\centering
\subfigure{
\includegraphics[width=.45\textwidth]{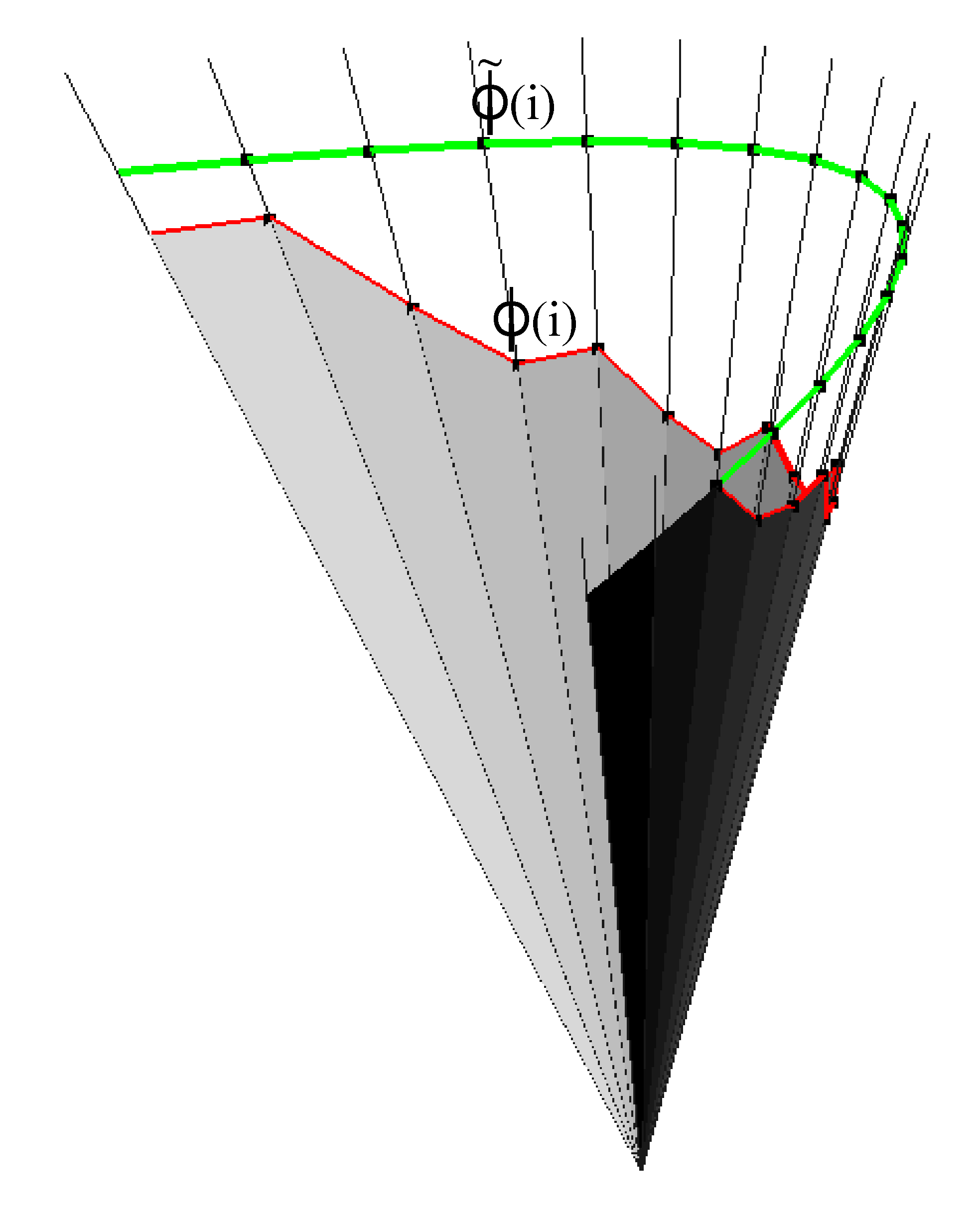}
}
\subfigure{
\includegraphics[width=.45\textwidth]{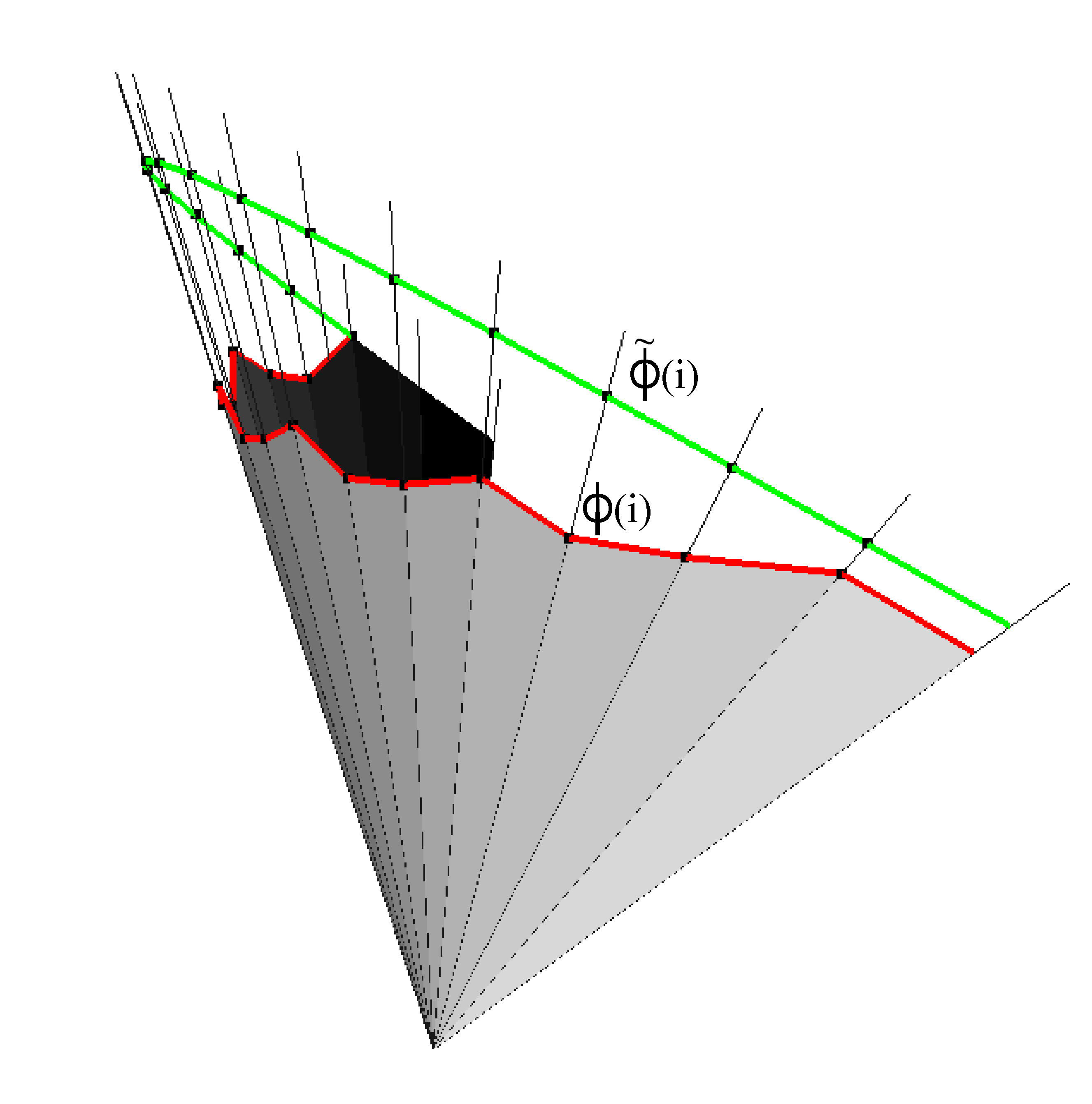}
}
\caption{Two views of a planar projective polygon $\tilde\phi$ and its equal-volume representative $\phi$.}
\label{fig:Projective}
\end{figure}

\subsection{Projective length}

We would like to define the projective length of $\tilde\phi$ as 
\begin{equation}\label{eq:DiscProjectiveLength1}
pl_1(\tilde\phi)=\sum_{j=2}^{N-2}  \left(  \rho_1'(i+\tfrac{1}{2})+2\tau(i+\tfrac{1}{2})  \right)^{1/3}
\end{equation}
or 
\begin{equation}\label{eq:DiscProjectiveLength2}
pl_2(\tilde\phi)=\sum_{j=2}^{N-2}  \left(  \rho_2'(i+\tfrac{1}{2})+2\tau(i+\tfrac{1}{2})  \right)^{1/3}
\end{equation}
but unfortunately these two definitions do not coincide. Nevertheless, if the polygonal line is obtained from a dense enough sampling of a smooth curve,
both of these formulas are close to projective length of the smooth curve given by equation \eqref{eq:ProjectiveLength}. Denote by $O(h^k)$
any quantity such that $\lim_{h\to 0}\frac{O(h^k)}{h^{k-\epsilon}}=0$, for any $\epsilon>0$.

\begin{lemma}\label{lemma:Convergence}
Assume that the polygonal line $\phi(i)$, $i=1,...N$, is obtained from $\phi(t)$, $0\leq t\leq T$, by uniform sampling. Then, for $Nh=T$, $ih=t$, we have 
$$
\rho_1'(i+\tfrac{1}{2})+2\tau(i+\tfrac{1}{2}) = \left(  \rho'(t)+2\tau(t) \right) h^3+ O(h^4).
$$
A similar result holds for $\rho_2$. 
\end{lemma}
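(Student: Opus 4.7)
Throughout, let $s=(i+\tfrac12)h$ be the midpoint; the lemma's $t=ih$ differs from $s$ by $h/2$, an $O(h)$ shift that is absorbed by the $O(h^4)$ remainder at the end. The plan is to Taylor expand both sides of the second equation in \eqref{eq:DiscFrenetCA1} about $s$, write the unknowns $\rho_1(i+1)$ and $\tau(i+\tfrac12)$ as formal power series in $h$, and match coefficients against the smooth Frenet data. The key structural fact is that $\phi(t)$ is parameterized by centro-affine arc length, so $[\phi,\phi',\phi'']=1$ and $\{\phi(s),\phi'(s),\phi''(s)\}$ is a basis of $\R^3$; using $\phi'''=\tau\phi-\rho\phi'$ and its derivatives, every higher derivative $\phi^{(k)}(s)$ can be rewritten in this basis, so the vector equation reduces to scalar identities in the coefficients.

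The centered differences at the midpoint are symmetric in $h$:
\begin{equation*}
\phi'(i+\tfrac12)=h\phi'+\tfrac{h^3}{24}\phi'''+O(h^5),\qquad \phi'''(i+\tfrac12)=h^3\phi'''+\tfrac{h^5}{8}\phi^{(5)}+O(h^7),
\end{equation*}
while $\phi(i)=\phi(s-h/2)=\phi-\tfrac{h}{2}\phi'+\tfrac{h^2}{8}\phi''-\tfrac{h^3}{48}\phi'''+O(h^4)$ is one-sided. Plugging these into the second line of \eqref{eq:DiscFrenetCA1} and writing $\rho_1(i+1)=R_2h^2+R_3h^3+O(h^4)$, $\tau(i+\tfrac12)=T_3h^3+T_4h^4+O(h^5)$, I match the $\phi$ and $\phi'$ components order by order. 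The $\phi$ equation gives $T_3=\tau(s)$ at order $h^3$ and $T_4=0$ at order $h^4$; the $\phi'$ equation then gives $R_2=\rho(s)$ at order $h^3$ and $R_3=-T_3/2=-\tfrac12\tau(s)$ at order $h^4$. The $\phi''$ component does not match exactly; it leaves an $O(h^5)$ residue, which is the quantitative statement that the uniformly sampled polygon is only approximately equal-volume, but this residue plays no role in determining $R$ and $T$.

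Applying the same analysis at the previous midpoint $s-h$ gives $\rho_1(i)=\rho(s-h)h^2-\tfrac12\tau(s-h)h^3+O(h^4)$, and subtracting yields
\begin{equation*}
\rho_1'(i+\tfrac12)=h^2[\rho(s)-\rho(s-h)]-\tfrac{h^3}{2}[\tau(s)-\tau(s-h)]+O(h^4)=h^3\rho'(s)+O(h^4),
\end{equation*}
since $\tau(s)-\tau(s-h)=O(h)$ contributes only $O(h^4)$. Because $T_4=0$, also $2\tau(i+\tfrac12)=2h^3\tau(s)+O(h^5)$, so adding gives $h^3(\rho'(s)+2\tau(s))+O(h^4)$; replacing $s$ by $t=ih$ costs only $O(h^4)$. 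For $\rho_2$ the argument is identical after replacing $\phi(i)=\phi(s-h/2)$ by $\phi(i+1)=\phi(s+h/2)$ in the first line of \eqref{eq:DiscFrenetCA1}; this flips the sign of the one-sided expansion, so that $\rho_2(i)$ acquires $+\tfrac12\tau(s)h^3$ in place of $-\tfrac12\tau(s)h^3$, but that correction differences away to $O(h^4)$ between consecutive midpoints.

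The main technical obstacle is bookkeeping. One must verify that $T_4=0$, for otherwise doubling $\tau(i+\tfrac12)$ would inject a spurious $O(h^4)$ contribution that would contaminate the leading $h^3$ term; and one must be comfortable using only two of the three components of a vector equation to define the unknowns, which is the correct interpretation since the sampled polygon fails to be exactly equal-volume precisely in the $\phi''$ direction.
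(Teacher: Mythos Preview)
Your proof is correct and follows the same Taylor-expand-and-match approach as the paper, though you execute it more carefully: you expand about the midpoint $s=(i+\tfrac12)h$ to get symmetric centered-difference formulas, you compute the $h^3$ correction $R_3=-\tfrac12\tau(s)$ explicitly (where the paper just writes an unspecified $c(t)h^3$), and you flag that uniform sampling of a centro-affine arc-length curve yields only an \emph{approximately} equal-volume polygon, correctly interpreting the $\phi''$-component residue as the quantitative failure of \eqref{eq:EqualVolumeCA}---a subtlety the paper passes over. One minor remark: your insistence on verifying $T_4=0$ is harmless but not actually needed, since even a nonzero $T_4$ would contribute only at order $h^4$, which the lemma's remainder already absorbs.
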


\begin{proof}
It is standard in numerical analysis that $\phi'(i+\tfrac{1}{2})=h\phi'(t)+O(h^2)$ and  $\phi'''(i+\tfrac{1}{2})=\phi'''(t)h^3+O(h^4)$.
Thus equation \eqref{eq:FrenetCA} can be written as 
$$
\phi'''(i+\tfrac{1}{2})=-\rho(t)h^2\phi'(i+\tfrac{1}{2})+\tau(t)h^3\phi(i)+O(h^4).
$$
We conclude that $\tau(i+\tfrac{1}{2})=\tau(t)h^3+O(h^4)$ and $\rho_1(i+1)=\rho(t)h^2+c(t)h^3+O(h^4)$. This last equation implies that
$\rho_1'(i+\tfrac{1}{2})=h^3\rho'(t)+O(h^4)$. Thus we conclude that
$$
\rho_1'(i+\tfrac{1}{2})+2\tau(i+\tfrac{1}{2})=\left( \rho'(t)+2\tau(t) \right) h^3+O(h^4),
$$
which proves the lemma.
\end{proof}

From this lemma we can obtain the following convergence result:

\begin{corollary}
The discrete projective lengths given by equations \eqref{eq:DiscProjectiveLength1} and \eqref{eq:DiscProjectiveLength2} converge to 
the smooth projective length given by \eqref{eq:ProjectiveLength} when $h\to 0$.
\end{corollary}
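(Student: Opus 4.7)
The plan is to apply Lemma \ref{lemma:Convergence} term by term, extract cube roots, and recognize the resulting sum as a Riemann sum for the integral \eqref{eq:ProjectiveLength}. Write $a(t) = \rho'(t) + 2\tau(t)$ and $a_j = \rho_1'(j+\tfrac{1}{2}) + 2\tau(j+\tfrac{1}{2})$, so that the lemma reads $a_j = a(t_j)\, h^3 + O(h^4)$ at the sample points $t_j = jh$.

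The first step is to convert the lemma's $h^3$-order identity into an $h$-order identity on the cube root. Assuming $a(t)$ is bounded away from zero on $[0,T]$, factor
\begin{equation*}
a_j^{1/3} = h\, a(t_j)^{1/3}\left(1 + \frac{O(h^4)}{a(t_j)\, h^3}\right)^{1/3} = h\, a(t_j)^{1/3} + O(h^2),
\end{equation*}
using the Taylor expansion $(1+u)^{1/3} = 1 + \tfrac{1}{3} u + \cdots$ valid for small $u$ and the $O(\cdot)$ convention introduced before the lemma.

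The second step is to sum over $j$ and recognize a Riemann sum. With $N h = T$,
\begin{equation*}
pl_1(\tilde\phi) = \sum_{j=2}^{N-2} a_j^{1/3} = h \sum_{j=2}^{N-2} a(t_j)^{1/3} + (N-3)\, O(h^2) = h \sum_{j=2}^{N-2} a(t_j)^{1/3} + O(h).
\end{equation*}
The leading term is a left Riemann sum of spacing $h$ for $\int_0^T a(t)^{1/3}\, dt = pl(\tilde\phi)$, which converges to this integral as $h\to 0$ since $t\mapsto a(t)^{1/3}$ is continuous on $[0,T]$. The remainder $O(h)$ vanishes in the limit. An identical argument, using the $\rho_2$ version of Lemma \ref{lemma:Convergence} that is asserted in its statement, handles $pl_2(\tilde\phi)$.

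The main obstacle is the possible vanishing of $a(t)$, since the factorization used above degenerates where $a(t_j)=0$. At such isolated points the sharp expansion must be replaced by a cruder bound (e.g.\ $a_j^{1/3} = O(h^{4/3})$ directly from $a_j = O(h^4)$), but the contribution of finitely many such indices to the sum is negligible as $h\to 0$, and the continuity of $a(t)^{1/3}$ still guarantees convergence of the leading Riemann sum. For a smooth curve with no points contained in a quadratic cone, $a(t)$ is nonzero throughout $[0,T]$ and the simple argument above applies directly.
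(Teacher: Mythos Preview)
Your argument is correct and is precisely the standard Riemann-sum argument one would supply here; the paper itself gives no proof of this corollary, treating it as an immediate consequence of Lemma~\ref{lemma:Convergence}. Your write-up therefore supplies the details the paper leaves implicit, including the cube-root expansion and the handling of possible zeros of $\rho'(t)+2\tau(t)$, which the paper does not address at all.
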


\begin{example}
\label{ex:example}
Consider 
$$
\tilde\phi(t) = (\exp(-t)\cos(t),\exp(-t)\sin(t),1), \ \ \ \  0\leq t \leq 2\pi.
$$ 
Then $\tilde\phi$ is projectively equivalent to  $\phi(t)=2^{-1/3} \exp(2t/3)\tilde\phi(t)$, which satisfies equation \eqref{eq:CentroAffineArcLength} with $O$ equal the origin. Straightforward calculations
show that $\rho(t)=2/3$, $\tau(t)=20/27$ and 
$$
pl(\tilde\phi) = 2\pi \tfrac{\sqrt[3]{40}}{3} \approx 7.162519249.
$$  

We have done some experiments considering uniform samplings of this curve with $N$ points. Table \ref{tab:table1} presents the results for $N = 10,100,1000$. Observe that both $pl_1(\tilde\phi)$ and $pl_2(\tilde\phi)$ get closer to $pl(\tilde\phi)$ as $h=\frac{2\pi}{N}$ decreases.   
\begin{table}[htb]
\centering
\caption{Experimental results of example \ref{ex:example}.}
\label{tab:table1}
\begin{tabular}{ c  c  c  c  c }

N & h & $pl_1(\tilde\phi)$ & $pl_2(\tilde\phi)$  &  \\ \hline \hline

10      & 0.62831 & 4.26627 & 3.55522 &   \\ \hline
100    & 0.06283 & 6.87572 & 6.80410 &  \\ \hline
 1000 & 0.00628 & 7.13407  & 7.12691  &  \\ \hline

\end{tabular}
\end{table}
\end{example}

% ------------------------------------------------------------------------
\end{document}